\newtheorem{theorem}{Theorem}[section]
\newtheorem{prop}[theorem]{Proposition}
\newtheorem{claim}[theorem]{Claim}
\newtheorem{lemma}[theorem]{Lemma}
\newtheorem{cor}[theorem]{Corollary}
\newtheorem{question}{Question}
\theoremstyle{definition}
\newtheorem{definition}[theorem]{Definition}
\theoremstyle{remark}
\newtheorem{fact}[theorem]{Fact}
\newcommand{\om}{\omega}
\newcommand{\D}{\mathcal{D}}
\newcommand{\rst}{\upharpoonright}
\newcommand{\crit}{\operatorname{crit}}
\newcommand{\1}{\mathbbm{1}}
\newcommand{\dom}{\mathop{\mathrm{dom}}\nolimits}
\newcommand{\la}{\langle}
\newcommand{\ra}{\rangle}
\newcommand{\Ord}{\operatorname{Ord}}
\newcommand{\lev}[2]{\sigma_{#1}(#2)}
\newcommand{\flev}[3]{\dot{\sigma}^{#1}_{#2}(#3)}
\newcommand{\be}{\begin{eqnarray}}
\newcommand{\ee}{\end{eqnarray}}
\newcommand{\Lev}[2]{\operatorname{Lev}_{#1}(#2)}
\newcommand{\ITP}{\operatorname{ITP}}
\newcommand{\ISP}{\operatorname{ISP}}
\newcommand{\TP}{\operatorname{TP}}
\newcommand{\Col}{\operatorname{Col}}
\newcommand{\defn}{\emph}
\title{The super tree property at the successor of a singular}
\author{Sherwood Hachtman}
\address{Department of Mathematics, Statistics, and Computer Science\\
University of Illinois at Chicago\\
Chicago, IL 60613, USA}
\email{hachtma1@uic.edu}
\thanks{Hachtman was partially supported by the National Science Foundation, grant DMS-1246844.}
\author{Dima Sinapova}
\address{Department of Mathematics, Statistics, and Computer Science\\
University of Illinois at Chicago\\
Chicago, IL 60613, USA}
\email{sinapova@uic.edu}
\thanks{Sinapova was partially supported by the National Science Foundation, grants DMS-1362485 and Career-1454945.}
\begin{document}

\begin{abstract}
   For an inaccessible cardinal $\kappa$, the super tree property (ITP) at $\kappa$ holds if and only if $\kappa$ is supercomact. However, just like the tree property, it can hold at successor cardinals. We show that ITP holds at the successor of the limit of $\omega$ many supercompact cardinals. Then we show that it can consistently hold at $\aleph_{\omega+1}$. We  also consider a stronger principle, $\ISP$ and certain weaker variations of it.  We determine which level of ISP can hold at a successor of a singular. These results fit in the broad program of testing how much compactness can exist in the universe, and obtaining large cardinal-type properties at smaller cardinals.
\end{abstract}
\bibliographystyle{plain}

\maketitle

\section{Introduction}

A major theme in set theory is the question of how much compactness can consistently exist in the universe.   We regard an object as satisfying a {\it compactness principle} if whenever a property holds for all strictly smaller substructures of the object, then this property holds for the object itself.  Such principles typically follow from large cardinals, but can often occur at successor cardinals as well.  So the aforementioned question can be rephrased as:  What combinatorial properties of large cardinals can consistently hold at small ones?

A key instance of such a combinatorial principle is the \emph{tree property}.  A regular cardinal $\mu$ has the tree property if every tree of height $\mu$, all of whose levels have size less than $\mu$, has a cofinal branch.  For inaccessible $\mu$, the tree property is equivalent to weak compactness of $\mu$; but this combinatorial principle can consistently hold at successor cardinals.  Results of Mitchell and Silver show that the tree property at $\aleph_2$ is equiconsistent with a weakly compact cardinal.  Mitchell's 1972 proof \cite{mitchell} obtaining the consistency of the tree property at $\aleph_2$ initiated a long, ongoing project in set theory: Obtain the tree property at all regular cardinals greater than $\aleph_1$.

There are strengthenings of the tree property that capture the essence of larger cardinals in a similar way.  Jech defined a principle which we call the strong tree property that characterizes strongly compact cardinals.  Then Magidor isolated a further strengthening, $\ITP$ (or the super tree property) that can characterize supercompact cardinals. We will give the precise definitions in the next section, but the highlight is the following:
\begin{fact} Let $\mu$ be an inaccessible cardinal.
\begin{enumerate}
\item (Jech, 1973 \cite{Jech}) The strong tree property holds at $\mu$ if and only if $\mu$ is strongly compact.
\item
(Magidor, 1974 \cite{Magidor}) $\ITP$ holds at $\mu$ if and only if $\mu$ is supercompact.
\end{enumerate}
\end{fact}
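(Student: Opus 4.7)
Both parts follow the same overall template: the large cardinal property supplies a suitable ultrafilter on $P_\kappa(\lambda)$ from which the required branch is extracted by an ultrapower argument, while conversely the combinatorial principle is strong enough to construct such ultrafilters from scratch.

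For the forward direction of (1), suppose $\kappa$ is strongly compact and let $F = \langle F_a : a \in P_\kappa(\lambda)\rangle$ be a $(\kappa,\lambda)$-tree. Pick a fine $\kappa$-complete ultrafilter $U$ on $P_\kappa(\lambda)$ and let $j : V \to M$ be the corresponding ultrapower embedding. Fineness gives $j[\lambda] \in j(P_\kappa(\lambda))$, so in $M$ we may pick any $t \in j(F)_{j[\lambda]}$. Defining $b : \lambda \to 2$ by $b(\alpha) = t(j(\alpha))$, the coherence axiom of $j(F)$ inside $M$ gives $b \rst a \in F_a$ for every $a \in P_\kappa(\lambda)$, so $b$ is a cofinal branch. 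The forward direction of (2) is essentially the same argument using a normal fine $\kappa$-complete ultrafilter: given a thin list $\langle d_a : a \in P_\kappa(\lambda)\rangle$, let $d^*$ denote the $j[\lambda]$-coordinate of its image under $j$, set $d = \{\alpha < \lambda : j(\alpha) \in d^*\}$, and observe that $j[d] = d^*$. By elementarity the set $\{a \in P_\kappa(\lambda) : d \cap a = d_a\}$ is $U$-large, and any $U$-large set is stationary in $P_\kappa(\lambda)$, so $d$ is an ineffable branch.

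The converse directions are more delicate and carry the bulk of the work. For (1), given $\lambda$ I would construct an auxiliary $(\kappa,\mu)$-tree (with $\mu$ large enough to index all subsets of $P_\kappa(\lambda)$) whose level-$a$ nodes encode candidate $\kappa$-complete fine ultrafilters on the Boolean subalgebra generated by the entries of $a$, with the coherence relation mirroring the natural restriction of ultrafilters to subalgebras. Inaccessibility of $\kappa$ keeps the level sizes below $\kappa$, and a cofinal branch then assembles a genuine $\kappa$-complete fine ultrafilter on $P_\kappa(\lambda)$, yielding strong compactness. For (2), one replaces this tree with a thin list that additionally encodes normality constraints on the candidate ultrafilter fragments, so that an ineffable branch produces a normal $\kappa$-complete fine ultrafilter, giving supercompactness.

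The main obstacle, in both converse directions, is the combinatorial bookkeeping of the coding: one must design the levels so that they remain of size less than $\kappa$, and so that the coherence demanded by the tree (respectively thin list) exactly matches the restriction behavior of $\kappa$-complete, fine (respectively additionally normal) ultrafilters. The normality upgrade in (2), translating ineffability of the branch into the diagonal-intersection closure required of a normal ultrafilter, is the essential combinatorial feature separating Magidor's theorem from Jech's, and is where I would expect to spend the most effort.
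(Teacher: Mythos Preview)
The paper does not prove this Fact; it is stated in the introduction as background, with citations to Jech~\cite{Jech} and Magidor~\cite{Magidor}, and no argument is given. So there is no paper proof to compare against.

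Your outline is the standard one and is essentially correct. The forward directions are fine as written. For the converse of~(1), your encoding idea is right: one enumerates $\mathcal{P}(P_\kappa(\lambda))$ by some $\lambda' = 2^{\lambda^{<\kappa}}$, and for $a \in P_\kappa(\lambda')$ lets the $a$-th level record which of the indexed sets should be ``in'' a candidate fine $\kappa$-complete ultrafilter on the algebra generated by $\{A_\xi : \xi \in a\}$; inaccessibility keeps levels small, and a cofinal branch assembles the ultrafilter. For the converse of~(2), your phrasing ``encodes normality constraints'' is slightly misleading: in Magidor's argument the list is essentially the same object (e.g.\ $d_a = \{\xi \in a : a \cap \lambda \in A_\xi\}$), and normality is not built into the list but rather extracted from the \emph{ineffability} of the branch. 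The stationary set $S = \{a : d_a = b \cap a\}$ is what delivers the diagonal-intersection closure, since stationarity in $P_\kappa(\lambda')$ is preserved under pushing down regressive functions. You do identify this in your last paragraph, so the understanding is there, but when you write it up, make clear that the upgrade from (1) to (2) lies in what the branch hypothesis gives you (stationarity versus mere cofinality), not in a change to the list itself.
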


Much as with the tree property, these combinatorial characterizations can consistently hold at successor cardinals.  Indeed, if one starts with a supercompact $\lambda$ and forces with the Mitchell poset to make the tree property hold at $\lambda=\aleph_2=2^\omega$, then even $\ITP$ will hold at $\aleph_2$ in the generic extension.   Similar remarks hold for the strong tree property.

So we have an even more ambitious version of the project to obtain the tree property everywhere: Can we consistently obtain the strong or super tree properties at every regular cardinal greater than $\aleph_1$?

Knowing that the tree property and its strenghtenings can be obtained at $\aleph_2$, the natural follow up is what happens at higher $\aleph_n$'s. Below we summarize some history on the subject:
\begin{enumerate}
\item
(Abraham, 1983 \cite{abraham}) Starting from a supercompact and a weakly compact above it, one can force the tree property simultaneously at $\aleph_2$ and $\aleph_3$.
\item
(Cummings-Foreman, 1998 \cite{Cummings-Foreman}) Starting from $\omega$-many supercompact cardinals, the tree property can be forced to hold simultaneously at every $\aleph_n$, for $n>1$.
\item
(Fontanella \cite{FontanellaCF}; Unger \cite{UngerCF} independently, 2013-4), In the Cummings-Foreman model, $\ITP$ holds an every $\aleph_n$, for $n>1$.
\end{enumerate}
A classical theorem of Aronszajn shows the tree property, hence also its strengthenings, fail at $\aleph_1$.  An old generalization by Specker of this theorem implies that even obtaining the tree property at $\nu^+$ and $\nu^{++}$ with $\nu$ strong limit requires a violation of the singular cardinals hypothesis (SCH) at $\nu$.  We note that obtaining this situation just at $\nu = \aleph_{\omega}$ is an open problem.  The crux of this program will thus likely be encountered at successors of singulars.

Our focus here is on this region.  The first result in that direction is by
Magidor and Shelah \cite{MagidorShelah}, who showed that if $\nu$ is a singular limit of supercompact cardinals, then $\mu = \nu^+$ has the tree property. They also showed that the tree property can be forced at $\aleph_{\omega+1}$. The original large cardinal hypothesis included a huge cardinal. It was later reduced to $\omega$-many supercompact cardinals, using a Prikry construction in \cite{sinapova-TP}. More recently,
Neeman \cite{Neeman} showed that from this situation, one can force $\mu = \aleph_{\omega+1}$ to have the tree property with a product of Levy collapses.

A few years ago, Fontanella \cite{Fontanella} generalized both arguments in \cite{MagidorShelah} and \cite{Neeman}, showing that the strong tree property holds the successor of a singular limit of strongly compact cardinals, and also can consistently hold at $\aleph_{\omega+1}$.

In this paper we show that even $\ITP$ can hold at the successor of a singular cardinal. We prove the following theorems:

\begin{theorem}
Suppose that $\langle\kappa_n\rangle_{n<\omega}$ is an increasing sequence of supercompact cardinals, $\nu = \sup_n \kappa_n$, and $\mu=\nu^+$. Then $\ITP$ holds at $\mu$.
\end{theorem}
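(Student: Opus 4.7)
The plan is to adapt the Magidor--Shelah argument for the tree property at the successor of a singular limit of supercompacts, refining it to produce not merely a cofinal branch but an ineffable one; the full supercompactness of each $\kappa_n$ (as opposed to just strong compactness, which sufficed for Fontanella's proof of the strong tree property at $\mu$) should be what upgrades cofinality of the agreement set to stationarity. Concretely, fix $\lambda \geq \mu$ and a thin $(\mu,\lambda)$-list $\vec{d}$; for each $n < \omega$, let $j_n : V \to M_n$ be a $\lambda$-supercompactness embedding with $\crit(j_n) = \kappa_n$, $M_n$ closed under $\lambda$-sequences, and $X_n := \{j_n(\alpha) : \alpha < \lambda\} \in M_n$. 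Since $j_n(\mu) \geq j_n(\kappa_n) > \lambda$, $X_n$ lies in $[j_n(\lambda)]^{<j_n(\mu)}$ as computed in $M_n$, and the candidate branch
\[
b_n := \{\alpha < \lambda : j_n(\alpha) \in j_n(\vec{d})_{X_n}\} \subseteq \lambda
\]
is well-defined, with $j_n(b_n) \cap X_n = j_n(\vec{d})_{X_n}$ holding in $M_n$ by construction.

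Next I would combine the $b_n$'s into a single branch $d \subseteq \lambda$. A key input is thinness: for each $a \in [\lambda]^{<\mu}$ of size $< \kappa_n$, applying $j_n$ shows that the restriction $b_n \cap a$ lies in the set $\{d_c \cap a : c \supseteq a\}$, which has cardinality $<\mu$. A pigeonhole/coherence argument, in the style of Magidor--Shelah and Fontanella, then lets us thread a single cofinal branch $d \subseteq \lambda$ through these local restrictions, recovering the strong tree property at $\mu$ as an intermediate step.

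The main obstacle is then promoting $d$ to an ineffable branch: given a club $C \subseteq [\lambda]^{<\mu}$, one must exhibit some $a \in C$ with $d \cap a = d_a$. By Menas's theorem for $P_\mu(\lambda)$, we may replace $C$ by a sub-club of the form $C_F = \{a : F(s) \subseteq a \text{ for all } s \in [a]^{<\omega}\}$ for some $F : [\lambda]^{<\omega} \to [\lambda]^{<\mu}$. The difficulty is that the outputs $F(s)$ can have size up to $\nu$, so no single embedding $j_n$ reflects $C_F$ uniformly: when $|F(s_0)| \geq \kappa_n$, the image $j_n(F(s_0))$ typically escapes $X_n$. This is where all countably many supercompacts $\kappa_n$ become genuinely needed. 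The plan is to construct $a$ as an increasing union $a = \bigcup_n a_n$ with each $a_n \in [\lambda]^{<\kappa_n}$, using at stage $n$ the $\kappa_n$-supercompactness embedding $j_n$ to enforce both the closure of $a_n$ under the portion of $F$ whose outputs have size $<\kappa_n$ and the agreement $d \cap a_n = d_{a_n}$. Normality of the supercompactness measures, combined with thinness of $\vec{d}$, should let these local choices be threaded coherently across $n$, producing $a \in C_F$ of size $\nu$ with $d \cap a = d_a$, as required.
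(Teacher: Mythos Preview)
Your proposal has a genuine gap at the ``threading'' step, and the ineffability argument as sketched cannot work.

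First, the branches $b_n$ you define are unrelated to one another: each $b_n$ is the pullback of $j_n(\vec d)_{X_n}$ along $j_n$, and there is no coherence between $b_n$ and $b_m$ for $n\neq m$. The Magidor--Shelah and Fontanella arguments do not thread $\omega$ many such branches; they use one embedding to set up a system and a second, larger one to find branches through it. In fact each of your $b_n$ is already a cofinal branch (its agreement set is $U_n$-measure one in $\mathcal{P}_{\kappa_n}(\lambda)$), so there is nothing to thread---the issue is that this agreement set is only stationary in $\mathcal{P}_{\kappa_n}(\lambda)$, not in $\mathcal{P}_\mu(\lambda)$, and indeed $\mathcal{P}_{\kappa_n}(\lambda)$ is nonstationary in $\mathcal{P}_\mu(\lambda)$. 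So no individual $b_n$ is visibly ineffable, and combining them does not help.

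Second, even granting a single branch $d$, your ineffability construction fails: arranging $d\cap a_n = d_{a_n}$ for each $n$ with $a=\bigcup_n a_n$ does \emph{not} yield $d\cap a = d_a$, since the list is not assumed coherent---there is no reason $d_a\cap a_n = d_{a_n}$. Moreover, to reflect the statement $d\cap a_n = d_{a_n}$ via $j_n$ you would need $j_n(d)\cap X_n = j_n(\vec d)_{X_n}$, which is exactly the statement $d=b_n$; since the $b_n$ differ, this cannot hold for all $n$.

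The paper's argument is structurally different. One uses the $\kappa_0$-embedding $i$ and the seed $z^*=\bigcup i{}''\mathcal{P}_\mu(\lambda)$ (so that images of $U_0$-measure-one sets are stationary in $\mathcal{P}_\mu(\lambda)$, at the correct scale), together with thinness, to isolate a \emph{single} $n$ and a stationary $S$ on which the level index of $id_{z^*}\cap i(z)$ is below $i(\kappa_n)$. Then the $\kappa_{n+1}$-embedding $j$ is used to pull back from a node $u\supseteq \bigcup j{}''\mathcal{P}_\mu(\lambda)$ in $j(S)$, producing only $\kappa_n$ many candidate branches $b_\delta$. A diagonal-intersection argument shows that $\{z: d_z=b_\delta\cap z\text{ for some }\delta<\kappa_n\}$ contains the $g$-image of a $U_0$-measure-one set, hence is stationary in $\mathcal{P}_\mu(\lambda)$; since this is a union of $\kappa_n<\mu$ pieces, one piece is stationary. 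The point is that two embeddings play distinct roles---one secures stationarity at the right scale, the other bounds the number of branches---and this division of labor is what your outline is missing.
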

Then we show this can be forced at smaller cardinals:
\begin{theorem}
Suppose that $\langle\kappa_n\rangle_{n<\omega}$ is an increasing sequence of supercompact cardinals, $\nu = \sup_n \kappa_n$ and $\mu=\nu^+$. Then there is a forcing extension in which $\mu=\aleph_{\omega+1}$ and $\ITP$ holds at $\aleph_{\omega+1}$.
\end{theorem}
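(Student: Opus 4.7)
The plan is to collapse the $\kappa_n$'s to become the $\aleph_n$'s by forcing with a product of L\'evy collapses, and then adapt the argument of Theorem 1 to work in the generic extension. This follows the template of Neeman's theorem for the tree property at $\aleph_{\omega+1}$, but since preservation of ineffable branches is substantially stronger than preservation of cofinal branches, the argument will require additional care.

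For the forcing, I would take $\mathbb{P} = \Col(\omega, {<}\kappa_0) \times \prod_{n<\omega} \Col(\kappa_n^+, {<}\kappa_{n+1})$, with supports chosen as in Neeman's construction so that $\mathbb{P}$ is $\mu$-cc and factors as $\mathbb{P}_{\leq n} \times \mathbb{P}_{>n}$, where $|\mathbb{P}_{\leq n}| < \kappa_{n+1}$ and $\mathbb{P}_{>n}$ is $\kappa_n^+$-closed. Standard cardinal computations should then give that in $V[G]$, $\kappa_0 = \aleph_1$, $\kappa_n = \aleph_{n+1}$ for $n \geq 1$, $\nu = \aleph_\omega$, and $\mu = \aleph_{\omega+1}$ is preserved.

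To establish $\ITP$ at $\mu$ in $V[G]$, I would fix $\lambda \geq \mu$ and a thin $\mathcal{P}_\mu(\lambda)$-list $\vec{d} = \langle d_a : a \in \mathcal{P}_\mu(\lambda)\rangle$ in $V[G]$, aiming to produce an ineffable branch. The next step is to choose $n$ large enough that, after strengthening below a condition in $\mathbb{P}_{\leq n}$, $\vec{d}$ is essentially given by a $\mathbb{P}_{>n}$-name. In $V$, I would then fix a $\lambda$-supercompactness embedding $j : V \to M$ with $\crit(j) = \kappa_{n+1}$ and $j(\kappa_{n+1}) > \lambda$. Using closure of the tail forcing in $M$ together with a standard master-condition argument, I would lift $j$ through $\mathbb{P}$ to obtain $\hat{j} : V[G] \to M[G^*]$. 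The candidate branch is then $d = \{\alpha < \lambda : j(\alpha) \in \hat{j}(\vec{d})_{j[\lambda]}\}$.

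The central obstacle will be showing that $d$ is genuinely ineffable, i.e.\ that the set of $a \in \mathcal{P}_\mu(\lambda)^{V[G]}$ with $d \cap a = d_a$ is stationary. Unlike in the tree property case, where a single cofinal branch suffices, ineffability demands reflection of stationary sets through $\hat{j}$, and the tail $\mathbb{P}_{>n}$ is only $\kappa_n^+$-closed, not $\lambda$-closed. My plan here is a pressing-down argument against a hypothetical club in $V[G]$ witnessing non-stationarity: using thinness of $\vec{d}$ together with the chain condition of $\mathbb{P}_{\leq n+1}$, I would freeze the relevant initial segments along the club in a suitable intermediate model and derive a contradiction from the reflective structure of $\hat{j}$. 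The most delicate aspect will be the bookkeeping --- choosing $n$ large enough that the hypothetical club is captured by a sufficiently small piece of $\mathbb{P}$, and making sure the pressing-down is definable in the correct intermediate extension so that the master-condition lifting of $j$ interacts properly with the stationarity analysis.
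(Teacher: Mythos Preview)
Your plan has a genuine gap at the step ``choose $n$ large enough that, after strengthening below a condition in $\mathbb{P}_{\leq n}$, $\vec{d}$ is essentially given by a $\mathbb{P}_{>n}$-name.'' There is no criterion here for what $n$ should be large enough \emph{for}: since $\mu$ sits above every $\kappa_m$, no single $\kappa_{n+1}$ is large enough to control a $\mathcal{P}_\mu(\lambda)$-list by a direct lift. More concretely, the master-condition lift you describe does not exist in $V[G]$; to build $\hat j$ one must force further with (roughly) $\Col(\kappa_{n+1},{<}j(\kappa_{n+2}))^V$, since $j(\mathbb{P}_{>n})$ is only $\kappa_n^+$-closed. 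Thus your candidate branch $d=\{\alpha: j(\alpha)\in \hat j(\vec d)_{j[\lambda]}\}$ lives in $V[G][H^*]$, and the quotient is only ${<}\kappa_{n+1}$-distributive over $V[G]$---far short of what is needed to conclude $d\in V[G]$. The pressing-down sketch you offer addresses ineffability, but not this prior and more serious question of why $d$ belongs to $V[G]$ at all; in general it will not.

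The paper's argument has a genuinely different architecture, and the difference is essential, not cosmetic. One first arranges the $\kappa_n$ to be indestructibly supercompact, forces with $\prod_n \Col(\kappa_n,{<}\kappa_{n+1})$, and then argues by contradiction that for \emph{some} $\tau<\kappa_0$ the further collapse $\mathbb{L}_\tau$ gives $\ITP_\mu$; assuming failure for every $\tau$ yields a family of bad lists $\dot d^\tau$. Crucially, one works in the intermediate model $V[H]$ where $\kappa_0$ is still supercompact, and uses a $\kappa_0$-measure $U_0$ \emph{first} to find an $n$ so that all relevant level indices are bounded below $\kappa_n$. This produces a system $\langle R_s\rangle_{s\in I}$ on $S\times\kappa_n$ indexed by triples $(\tau,p,q)$ with $(p,q)\in\mathbb{L}_\tau$. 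Only then is a $\kappa_{n+2}$-embedding lifted (in a further extension) to produce a system of $\kappa_n$-many candidate branches $b_{s,\delta}$, and Neeman's branch-preservation lemma shows at least one lies in $V[H]$. A second diagonal-intersection argument (deleting the ground-model branches and reapplying preservation) then shows enough of the $b_{s,\delta}$ are in $V[H]$ that one of them is forced by some $(p,q)\in\mathbb{L}_\tau$ to be ineffable. Your proposal omits both the $U_0$ step and the system/branch-preservation machinery; these are precisely the devices that get a branch into the target model, and the single-embedding template from the $\aleph_2$ case does not survive the passage to successors of singulars.
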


We also consider a strengthening of $\ITP$, the so-called ineffable slender list property ($\ISP$).  This principle has been of interest in connection with the SCH as well as that of the consistency strength of the proper forcing axiom (PFA). Viale and Wei{\ss} showed that under PFA, $\ISP$ holds at $\aleph_2$ \cite{viale-weiss}.  In \cite{viale} Viale showed that $\ISP$ at $\aleph_2$ together with stationary many internally unbounded models implies that SCH holds; it is still open whether $\ISP$ by itself is enough.  Viale and Weiss gave a striking application \cite{viale-weiss}, showing that any standard iteration to force PFA must start with a strongly compact cardinal.  If in addition the iteration is proper, then there must be a supercompact cardinal in the ground model.

We consider both $\ISP$ and certain weakenings, $\ISP(\delta, \mu, \lambda)$.  Here $\ISP_\mu$ as defined by Weiss corresponds to $\ISP(\aleph_1, \mu, \lambda)$ for all $\lambda$; the principle is weakened as $\delta$ is increased; and $\ISP(\mu, \mu, \lambda)$ for all $\lambda$ implies $\ITP_\mu$. The precise definitions are in the next section.  We determine exactly which level of ISP can hold at a successor of a singular.
\begin{theorem}
Suppose $\nu$ is a singular strong limit cardinal, and $\mu = \nu^+$.  Then for all $\delta \leq \nu$, $\ISP(\delta,\mu,2^\nu)$ fails.  In particular, if SCH holds at $\nu$, then $\ISP(\nu,\mu,\mu)$ fails.
\end{theorem}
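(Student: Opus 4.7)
The plan is to build a single $\nu$-slender $(\mu,2^\nu)$-list $\la d_x : x \in [2^\nu]^{<\mu}\ra$ that admits no ineffable branch. Since $\nu$-slenderness implies $\delta$-slenderness for every $\delta \leq \nu$, this one counterexample witnesses the failure of $\ISP(\delta,\mu,2^\nu)$ for all $\delta \leq \nu$ simultaneously. The parenthetical under SCH is then immediate, because $2^\nu = \nu^+ = \mu$ when SCH holds at $\nu$.

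The driving observation is that for each $x \in [2^\nu]^{<\mu}$ we have $|x| \leq \nu < 2^\nu$, so, fixing a wellordering $<^*$ of $2^\nu$ of order type $2^\nu$, I can canonically choose a diagonal witness $g_x := \min_{<^*}(2^\nu \setminus x)$. Next, fix a cofinal sequence $\la \nu_n : n < \cof(\nu)\ra$ in $\nu$; since $\nu$ is strong limit, $2^{\nu_n} < \nu$ for each $n$. Identifying $2^\nu$ with $\mathcal{P}(\nu)$, I decompose $g_x$ into the initial segments $g_x \cap \nu_n$ and encode these pieces, via fixed bijections $\mathcal{P}(\nu_n) \to \nu$ and a fixed injection $\nu \to x$ (available on a club of $x$'s), into a subset $d_x \subseteq x$. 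Uniform definability from the fixed parameters $(<^*,\la \nu_n\ra,\ldots)$ then gives $\nu$-slenderness: for $M \prec H_\theta$ of size $<\mu$ containing the parameters and any $b \in M$ with $|b|<\nu$, the set $d_{M \cap 2^\nu} \cap b$ is first-order definable in $M$ from $b$ and a bounded amount of data about $M \cap 2^\nu$, placing it in $M$.

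The crux and main obstacle is ruling out an ineffable branch. Suppose toward contradiction that $b \subseteq 2^\nu$ witnesses that $\{x : b \cap x = d_x\}$ is stationary in $[2^\nu]^{<\mu}$. Choose $M \prec H_\theta$ of size $<\mu$, closed under the relevant parameters and containing $b$, with $M \cap 2^\nu$ lying in the stationary set. Then $b \cap (M \cap 2^\nu) = d_{M \cap 2^\nu}$, which by construction encodes $g_{M \cap 2^\nu}$; applying elementarity and the fact that $b$ and all parameters lie in $M$, one recovers $g_{M \cap 2^\nu} \in M$, contradicting the definitional choice $g_{M \cap 2^\nu} \in 2^\nu \setminus (M \cap 2^\nu)$. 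The delicate point is engineering the encoding so that the decomposition of $g_x$ into $\cof(\nu)$-many chunks of size $<\nu$ simultaneously respects the constraint $d_x \subseteq x$ (which requires $x$ to contain the representatives of all the bits being encoded, hence the need to restrict to a club) and the slenderness condition (which demands that the encoding be locally recoverable from bounded $b \in M$). Here the singularity of $\nu$ is essential: were $\nu$ regular, no such decomposition of $g_x$ into $<\nu$-sized pieces inside a strong-limit tree would be available, and the encoding scheme would fail.
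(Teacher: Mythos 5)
Your core idea -- diagonalize against $M$ by choosing $g_x \in 2^\nu \setminus x$ canonically via a wellordering, coding $g_x$ as a subset of $\nu$, and letting $d_x$ be that code -- is sound, and once cleaned up it proves the theorem. In spirit this is the same diagonalization as the paper's: the paper fixes a bijection $f:2^\nu\to H_\mu$ and, for $z$ with $M_z=f''z\prec H_\mu$, lets $d_z$ be the $f$-pullback of any subset of $\nu$ not lying in $M_z$; you fix a bijection $e:2^\nu\to\mathcal{P}(\nu)$ and take the $<^*$-least missing element. Both rest on $|M|<2^\nu$ to find a subset of $\nu$ outside $M$. Your phrasing avoids the detour through $H_\mu$ and guessing models and is arguably a bit more self-contained; the paper's phrasing has the advantage of simultaneously proving Fact~\ref{ISPgivesguessers}, which it uses later.

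That said, several details need fixing or are actively misleading. First, the decomposition of $g_x$ along a cofinal sequence $\la\nu_n\ra$ is unnecessary: just set $d_x = e(g_x) \subseteq \nu$, which is a subset of $x$ whenever $\nu \subseteq x$, i.e.\ on a club of $x \in \mathcal{P}_\mu(2^\nu)$. Second, your slenderness argument (``first-order definable from $b$ and a bounded amount of data about $M\cap 2^\nu$'') is hand-waving that doesn't quite make sense as stated; the real and much simpler point is that \emph{every} $\mathcal{P}_\mu(\lambda)$-list is $\nu$-slender when $\nu$ is a strong limit: for $b\in M$ with $|b|<\nu$ and $\nu\subseteq M$, we have $\mathcal{P}(b)\in M$, $|\mathcal{P}(b)|=2^{|b|}<\nu\subseteq M$, hence $\mathcal{P}(b)\subseteq M$, and in particular $d_{M\cap\lambda}\cap b\in M$. (This is exactly the paper's observation that such $M$ trivially $\nu$-approximate every subset of $\nu$.) Third, the crucial recovery step -- ``one recovers $g_{M\cap 2^\nu}\in M$'' -- is left unexplained, and it is the heart of the proof. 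What makes it work is precisely that $d_x\subseteq\nu$: on the stationary set, $d_{M\cap 2^\nu}=b\cap(M\cap 2^\nu)\subseteq\nu$, and since $\nu\subseteq M$ this equals $b\cap\nu$, which \emph{is} in $M$ (both $b$ and $\nu$ are). Decoding via $e\in M$ then gives $g_{M\cap 2^\nu}\in M\cap 2^\nu$, contradicting the choice of $g$. Without the observation that $d_x$ lives inside a fixed set ($\nu$) that is a subset of $M$, there is no reason $b\cap(M\cap 2^\nu)$ should be in $M$, so this step must be spelled out.

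Finally, your closing remark that ``the singularity of $\nu$ is essential'' is wrong. Nothing in the argument (yours or the paper's) uses singularity; both the automatic slenderness and the recovery step only use that $\nu$ is a strong limit with $\nu\subseteq M$. The theorem is stated for singular $\nu$ because that is the regime the paper cares about, not because the argument requires it.
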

\begin{theorem}\label{ISP}
Suppose that $\langle\kappa_n\mid n<\omega\rangle$ is an increasing sequence of supercompact cardinals with limit $\nu$, $\mu=\nu^+$, and $\lambda \geq \mu$. Then $\ISP(\mu,\mu,\lambda)$ holds.
\end{theorem}

This paper is organized as follows. In section 2, we give the definitions of the principles discussed above and fix some notation. In section 3 we prove that $\ITP$ holds at successor of limit of supercompacts. In section 4, we prove the theorems regarding $\ISP$, and in section 5 we prove the consistency of $\ITP$ at $\aleph_{\omega+1}$. We then conclude with some open questions.

\section{Preliminaries and notation}

In this section we define the notion of lists and the strengthenings of the tree property discussed in the last section.
\begin{definition} Suppose $\mu$ is a regular cardinal and $\lambda \geq \mu$.  We say that ${d} = \la d_z \ra_{z \in \mathcal{P}_\mu(\lambda)}$ is a \defn{$\mathcal{P}_\mu(\lambda)$-list} if for all $z \in \mathcal{P}_\mu(\lambda)$, $d_z\subseteq z$.  A tree-like structure is obtained from a list by regarding the levels, indexed by $z \in \mathcal{P}_\mu(\lambda)$, as consisting of restrictions of $d_y$'s above:
  \[
   \Lev{d}{z} = \{d_y \cap z \mid z \subseteq y, y \in \mathcal{P}_\mu(\lambda)\}.
  \]
A \defn{cofinal branch through $d$} is a set $b\subseteq \lambda$ so that for all $z \in \mathcal{P}_\mu(\lambda)$, $b \cap z \in \Lev{d}{z}$.

A $\mathcal{P}_\mu(\lambda)$-list is \defn{thin} if $|\Lev{d}{z}|<\mu$ for all $z \in \mathcal{P}_\mu(\lambda)$.

\end{definition}
Note that if $\mu$ is inaccessible, then every $\mathcal{P}_\mu(\lambda)$-list is thin.

\begin{definition}
We say \defn{$\TP(\mu,\lambda)$ holds} if for every thin $\mathcal{P}_\mu(\lambda)$-list $d$, there is a cofinal branch $b$ through $d$.  The \defn{strong tree property} holds at $\mu$ if $\TP(\mu,\lambda)$ holds for all $\lambda \geq \mu$.

A set $b\subseteq \lambda$ is an \defn{ineffable branch} through $d$ if $\{z \in \mathcal{P}_\mu(\lambda) \mid b \cap z = d_z\}$ is stationary.  We say \defn{$\ITP(\mu,\lambda)$ holds} if every thin $\mathcal{P}_\mu(\lambda)$-list has an ineffable branch.  The \defn{super tree property} holds at $\mu$ ($\ITP_\mu$ holds) if $\ITP(\mu,\lambda)$ holds for all $\lambda \geq \mu$.
\end{definition}
Thus the levels of a list $d$ consist of small approximations to a subset of $\lambda$, and a cofinal branch is a subset of $\lambda$ that is approximated at every level.  Note that $\TP(\mu,\mu)$ is equivalent to the tree property at $\mu$.

A further strengthening of $\ITP$ called $\ISP$ was defined by Wei{\ss} in \cite{Weiss}.

\begin{definition}
A $\mathcal{P}_\mu(\lambda)$-list is \defn{slender} if for all sufficiently large $\theta$, for club many $M\in\mathcal{P}_\mu(H_\theta)$, for all $b\in M\cap\mathcal{P}_{\omega_1}(\lambda)$,  $d_{M\cap\lambda}\cap b\in M$.
$\ISP(\mu)$ holds if for every $\lambda\geq \mu$, every  slender $\mathcal{P}_\mu(\lambda)$ list has an ineffable branch.
\end{definition}

All thin lists are slender, though the converse can fail.  Consequently, $\ISP$ implies $\ITP$.  Moreover, like $\ITP$, $\ISP$ can consistently hold at $\aleph_2$.  For example, in \cite{viale-weiss} it is shown that PFA implies $\ISP$ holds at $\aleph_2$.

Viale and Wei{\ss} gave a characterization of $\ISP$ via \defn{guessing models}. We will discuss this in more detail later, together with a refinement of slenderness to analyze what happens when $\mu$ is the successor of a singular cardinal.

We use standard notation. For conditions in a forcing poset $\mathbb{P}$, $p\leq q$ denotes that $p$ is stronger than $q$. We say that $\mathbb{P}$ is $\kappa$-closed to mean that decreasing sequences of length less than $\kappa$ have a lower bound. $\mathbb{P}$ is ${<}\kappa$-distributive if it adds no new sequences of size less than $\kappa$.

\section{ITP at the successor of a singular limit of supercompacts}
We begin with the simplest instance of the super tree property, namely $\ITP(\mu,\mu)$.
\begin{theorem}\label{baby}
Suppose that $\langle\kappa_n\ra_{n<\omega}$ is an increasing sequence of supercompact cardinals with limit $\nu$ and $\mu=\nu^+$. Then we have $\ITP(\mu, \mu)$.
\end{theorem}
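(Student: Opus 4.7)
The plan is to derive the ineffable branch directly from a single supercompactness embedding, exploiting the fact that for a $\mathcal{P}_\mu(\mu)$-list the relevant index $j[\mu]$ already lies in the target model of a $\mu$-supercompactness embedding. Concretely, fix any one of the $\kappa_n$---call it $\kappa$---and a normal fine ultrafilter $\mathcal{U}$ on $\mathcal{P}_\kappa(\mu)$ witnessing $\mu$-supercompactness; let $j : V \to M \cong \mathrm{Ult}(V,\mathcal{U})$ be the associated embedding, so $\crit(j) = \kappa$ and $j[\mu] \in M$ with $|j[\mu]|^M = \mu < j(\mu)$.

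Given a thin $\mathcal{P}_\mu(\mu)$-list $d$, set $B := j(d)_{j[\mu]} \subseteq j[\mu]$ in $M$ (meaningful since $j[\mu] \in \mathcal{P}_{j(\mu)}(j(\mu))^M$), and pull $B$ back to $V$ via
\[
 b := \{ \alpha < \mu \mid j(\alpha) \in B \} = \{ \alpha < \mu \mid \{z \in \mathcal{P}_\kappa(\mu) \mid \alpha \in d_z\} \in \mathcal{U} \}.
\]
The proposed ineffable branch is $b$; writing $S := \{z \in \mathcal{P}_\mu(\mu) \mid b \cap z = d_z\}$, the task reduces to showing $S$ is stationary. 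The crucial computation is
\[
 j(b) \cap j[\mu] = \{j(\alpha) \mid \alpha \in b\} = B \cap j[\mu] = B = j(d)_{j[\mu]},
\]
where the penultimate equality uses $B \subseteq j[\mu]$ (an immediate consequence of $d_z \subseteq z$). By elementarity this says $j[\mu] \in j(S)$ in $M$. To finish, given a club $C \subseteq \mathcal{P}_\mu(\mu)$, pick $F : [\mu]^{<\omega} \to \mu$ such that the basic club $C_F := \{z \mid F[z^{<\omega}] \subseteq z\}$ is contained in $C$; for finite $x \subseteq \mu$ we have $j(x) = j[x]$ since $|x| < \crit(j)$, hence $j(F)(j[x]) = j(F(x)) \in j[\mu]$, so $j[\mu]$ is $j(F)$-closed, i.e., $j[\mu] \in j(C_F)$. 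Thus $j[\mu] \in j(S \cap C_F)$, and by elementarity $S \cap C \supseteq S \cap C_F \ne \emptyset$, so $S$ is stationary.

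The only point that really needs attention is the collapse $j(b) \cap j[\mu] = B$: both inclusions rely on $d_z \subseteq z$, which places $B$ inside $j[\mu]$ and ensures $j(b) \cap j[\mu]$ is exactly the image $j[b]$. Once that holds, the rest follows from the standard reflection properties of $j$ combined with normality and fineness of $\mathcal{U}$. Thinness of $d$ is not actually used in this baby case, nor are the higher $\kappa_m$ for $m \neq n$; the singular structure of $\mu$, and in particular the $\omega$-sequence of supercompacts, only plays a genuine role when one passes to $\mathcal{P}_\mu(\lambda)$-lists with $\lambda > \mu$, where a single $\mu$-supercompactness embedding no longer suffices to accommodate the unbounded index set.
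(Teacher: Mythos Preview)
Your argument is correct, and it is genuinely different from---and considerably simpler than---the paper's proof. The paper develops the result via the full machinery of indexed levels, two distinct supercompactness embeddings $j_{U_0}$ and $j_{U_{n+1}}$, a system of $\kappa_n$-many candidate branches obtained by pulling back from a node above $\sup j"\mu$, diagonal intersections, and a splitting-bound argument to isolate a single ineffable branch. Your proof instead extracts the branch in one step from $j(d)_{j[\mu]}$ via a single $\mu$-supercompactness embedding and the standard reflection that $j[\mu]$ lies in $j(C)$ for every club $C\subseteq\mathcal P_\mu(\mu)$. Your route is more elementary, uses only one supercompact below $\mu$, and as you note does not even require thinness.

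What the paper's longer argument buys is a template: its structure is deliberately parallel to the forcing proof in Section~5, where one must establish $\ITP$ at $\aleph_{\omega+1}$ in a model in which no supercompactness embedding with critical point below $\mu$ survives. There the one-shot reflection you use is unavailable, and the two-stage ``bound the level index, then pull back branches from a higher embedding'' scheme becomes essential.

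One small correction to your closing remark: your direct method does not break down at $\lambda>\mu$. Replacing the $\mu$-supercompactness embedding by a $\lambda$-supercompactness embedding $j$ with $\crit(j)=\kappa<\mu$, one still has $|j[\lambda]|^M=\lambda<j(\kappa)<j(\mu)$, so $j[\lambda]\in\mathcal P_{j(\mu)}(j(\lambda))^M$ and the identical argument yields $\ITP(\mu,\lambda)$. Thus for the ZFC results of Section~3 a single supercompact below $\mu$ suffices throughout; the $\omega$-sequence of supercompacts earns its keep only in the forcing construction of Section~5.
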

\begin{proof}
Since $\mu$ is club in $\mathcal{P}_\mu(\mu)$, we may assume that our list is indexed by $\mu$.  So let $d=\langle d_\alpha\mid\alpha<\mu\rangle$ be a thin $\mu$-list.  
Each $\Lev{d}{\alpha}$ has size at most $\nu$, so we enumerate these as $\Lev{d}{\alpha} = \{ \lev{\alpha}{\xi} \mid \xi < \nu\}$.

We will show that there is a $b\subset\mu$ such that $\{\alpha<\mu\mid d_\alpha=b\cap\alpha\}$ is stationary.  The ineffability of $b$ will come from a basic fact about sets in supercompactness measures.
\begin{fact}\label{stationarysups}
  Let $\mu$ be regular and suppose $U$ is a normal measure on $\mathcal{P}_\kappa(\mu)$ with $\kappa \leq \mu$.  Then for all $A \in U$, $\{\sup x \mid x \in A\}$ is stationary.
\end{fact}

For each $n$, let $U_n$ be a normal measure on $\mathcal{P}_{\kappa_n}(\mu)$, and let $j_{U_n}$ be the corresponding ultrapower embedding.
\begin{lemma}
There are $n<\omega$, an unbounded $S\subset\mu$, and $A\in U_0$ such that for all $x\in A$ and $\alpha\in x\cap S$, there is $\xi<\kappa_n$ with $d_{\sup x}\cap\alpha=\lev{\alpha}{\xi}$.
\end{lemma}
\begin{proof}
Let $i=j_{U_0}$ and consider $id_{\sup i"\mu}$. For all $\alpha<\mu$, there is some $n$ and $\xi<i(\kappa_n)$ such that $id_{\sup i"\mu}\cap i(\alpha)=i\lev{i(\alpha)}{\xi}$.

Then for some $n$, there is a cofinal set $S\subset\mu$ such that for all $\alpha\in S$, $id_{\sup i"\mu}\cap i(\alpha)$ has index below $i(\kappa_n)$ in $\Lev{id}{i(\alpha)}$; namely,  for some $\xi< i(\kappa_n)$, $$id_{\sup i"\mu}\cap i(\alpha)=i\lev{i(\alpha)}{\xi}.$$ Then for all $\alpha\in S$, there is a measure one set $A_\alpha\in U_0$ such that for all $x\in A_\alpha$,  for some $\xi<\kappa_n$, $d_{\sup x}\cap \alpha=\lev{\alpha}{\xi}$.

Set $A:=\triangle_{\alpha\in S}A_\alpha$. Then $S, A$ are as desired.
\end{proof}

Fix $S, A, n$ as in the conclusion of the lemma.  Note that then for all $\alpha<\beta$ both in $S$, there are $\xi, \delta<\kappa_n$ such that $\lev{\alpha}{\xi}=\lev{\beta}{\delta}\cap\alpha$.

For the purposes of the following lemma, define a (not necessarily cofinal) branch through $d$ as a set $b \subseteq \mu$ so that $b \cap \alpha \in \Lev{d}{\alpha}$ for all $\alpha \leq \sup b$.  (Note $b$ may be a cofinal branch even if $b$ is bounded as a subset of $\mu$.)
\begin{lemma}
There is a sequence $\langle b_\delta\mid\delta<\kappa_n\rangle$ of (possibly bounded) branches through $d$ and a measure one set $A'\in U_0$ such that for all $x\in A'$ and $\alpha\in x$, there is $\delta<\kappa_n$ such that $d_{\sup x}\cap\alpha=b_\delta\cap\alpha$.
\end{lemma}
\begin{proof}
Let $j=j_{U_{n+1}}:V\rightarrow M$, and let $\gamma\in j(S)\setminus\sup j"\mu$.  By elementarity, for all $\alpha\in S$, there are $\xi, \delta<\kappa_n$ such that $j(\lev{\alpha}{\xi})=j\lev{j(\alpha)}{\xi}=j\lev{\gamma}{\delta}\cap j(\alpha)$.

For each $\delta<\kappa_n$, let $$b_\delta=\bigcup\{\lev{\alpha}{\xi}\mid \alpha\in S, j(\lev{\alpha}{\xi})=j\lev{\gamma}{\delta}\cap j(\alpha)\}.$$  That is, $b_\delta$ is the pullback by $j$ of ``the predecessors" of $j\lev{\gamma}{\delta}$.  We have the following:
\begin{itemize}
\item
Each $b_\delta$ is a branch, as it is the union of a coherent sequence of elements of the $\alpha$-th level of $d$ ranging over $\alpha<\mu$.
\item
There is some $\delta<\kappa_n$ such that $\{ \alpha < \mu \mid b_\delta \cap \alpha \in \Lev{d}{\alpha}\}$ is unbounded in $\mu$.  Such $b_\delta$ is a cofinal branch through $d$.
\item
For each $\alpha < \mu$, there is $\delta<\kappa_n$ such that $b_\delta\cap\alpha \in \Lev{d}{\alpha}$.

\end{itemize}
The last item follows by the second item, but we gain more information from the following direct argument.

Fix $\alpha<\mu$. Choose $x\in j(A)$, such that $\gamma\in x$ and there is some $\alpha'\in S\setminus\alpha$ with $j(\alpha')\in x$. Then we apply elementarity. In particular, for all such $x$'s there are some $\xi,\delta<\kappa_n$ such that:
\begin{itemize}
\item
$jd_{\sup x}\cap\gamma=j\lev{\gamma}{\delta}$, and
\item
$jd_{\sup x}\cap j(\alpha')=j \lev{j(\alpha')}{\xi}=j(\lev{\alpha'}{\xi})$.
\end{itemize}
Then $jd_{\sup x}\cap j(\alpha')=j\lev{\gamma}{\delta}\cap j(\alpha')=j(b_\delta)\cap j(\alpha')$.  The last equality is since by definition, $b_\delta\cap\alpha'=\lev{\alpha'}{\xi}$.
So, $jd_{\sup x}\cap j(\alpha)=j(b_\delta)\cap j(\alpha)=j(b_\delta\cap\alpha)$.

So we have in $M$ that for all $\alpha<\mu$, and for $j(U_0)$-measure one many $x\in\mathcal{P}_{\kappa_0}(j(\mu))$, there is some $\delta$, such that
$jd_{\sup x}\cap j(\alpha)=j(b_\delta\cap\alpha)$.

Note that $j(\langle b_\delta\mid \delta<\kappa_n\rangle)=\langle j(b_\delta)\mid \delta<\kappa_n\rangle$, since $\kappa_n$ is below the critical point.

Then by elementarity, in $V$ we have for all $\alpha<\mu$ that there is a measure one set $A_\alpha\in U_0$ such that for all $x\in A_\alpha$, there is $\delta<\kappa_n$ such that $d_{\sup x}\cap\alpha=b_\delta\cap \alpha$.  Set  $A':=\triangle A_\alpha$. $A'$ is as desired.
\end{proof}

Fix the branches $\langle b_\delta\mid\delta<\kappa_n\rangle$ and $A'\in U_0$ as in the above lemma. By restricting to a subset of $\kappa_n$, if necessary, assume that for $\eta<\delta$, $b_\eta$ and $b_\delta$ are distinct branches. (Note that this is not automatic for all $\eta<\delta$, since our top node $\gamma$ may be strictly above $\sup j"\mu$.)
Then for distinct $\eta, \delta<\kappa_n$, let $\alpha_{\eta, \delta}$ be such that for all $\alpha\geq\alpha_{\eta, \delta}$, $b_\eta\cap\alpha\neq b_\delta\cap\alpha$ (it exists, otherwise they will be the same branch).
Let $\bar{\alpha}=\sup_{\eta, \delta<\kappa_n}\alpha_{\eta,\delta}<\mu$.

But then for all $x\in A'$ with $\bar{\alpha}\in x$,  there is a unique $\delta<\kappa_n$, such that for all $\alpha\in x\setminus \bar{\alpha}$,
$d_{\sup x}\cap\alpha=b_\delta\cap\alpha$. By intersecting with a measure one set, we may assume that for all $x\in A'$, $x$ is unbounded in $\sup x$. Then $d_{\sup x}=b_\delta\cap\sup x$.

So the set $T:=\{\beta<\mu\mid (\exists \delta<\kappa_n) d_\beta=b_\delta\cap\beta\}\supset\{\sup x\mid \bar{\alpha}\in x, x\in A'\}$, and by Fact~\ref{stationarysups}, this last set is stationary.  For each $\delta$, let $T_\delta:=\{\beta<\mu\mid d_\beta=b_\delta\cap\beta\}$.  Since $T=\bigcup_{\delta<\kappa_n} T_\delta$, there is some $\delta$ so that $T_\delta$ is stationary.

This completes the proof of Theorem~\ref{baby}.
\end{proof}
Next, we argue for the two-cardinal version.
\begin{theorem}\label{two cardinal}
Suppose that $\langle\kappa_n\ra_{n<\omega}$ is an increasing sequence of supercompact cardinals with limit $\nu$ and $\mu=\nu^+$, and let $\lambda>\mu$ be inaccessible. Then we have $\ITP(\mu, \lambda)$.
\end{theorem}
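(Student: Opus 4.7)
The plan is to generalize the two-lemma argument of Theorem~\ref{baby} to $\mathcal{P}_\mu(\lambda)$-lists, replacing the ordinal $\alpha<\mu$ by $y \in \mathcal{P}_\mu(\lambda)$ and the ``top'' $\sup i"\mu$ by $i"\lambda$. Fix a thin $\mathcal{P}_\mu(\lambda)$-list $d$ and, by thinness, enumerate each level $\Lev{d}{y} = \{\sigma_y(\xi) \mid \xi < \nu\}$. For each $n$, fix a normal fine measure $U_n$ on $\mathcal{P}_{\kappa_n}(\lambda)$ with $\lambda$-supercompactness embedding $j_n\colon V \to M_n$ via the supercompactness of $\kappa_n$.

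For the analogue of the first lemma, apply $i = j_0$. For each $y \in \mathcal{P}_\mu(\lambda)$, the restriction $i(d)_{i"\lambda} \cap i"y$ lies in the level $\Lev{i(d)}{i"y}$ in $M_0$, which has size less than $i(\mu)$ by thinness. An $M_0$-enumeration of this level assigns an index below $i(\nu) = \sup_n i(\kappa_n)$, hence below $i(\kappa_{n_y})$ for some $n_y < \omega$. Pigeonholing over the countably many values of $n_y$ fixes an $n$ on which the set $S \subseteq \mathcal{P}_\mu(\lambda)$ of such $y$ is stationary; pulling back via $U_0$ and normally diagonalizing produces $A \in U_0$ such that for every $x \in A$ and every $y \in S$ suitably related to $x$, $d_x \cap y = \sigma_y(\xi)$ for some $\xi < \kappa_n$.

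For the second lemma, let $j = j_{n+1}$, so $\crit j = \kappa_{n+1}$ fixes $\kappa_0,\ldots,\kappa_n$. By elementarity $j(S)$ is stationary in $M_{n+1}$, and $\{w \in \mathcal{P}_{j(\mu)}(j(\lambda)) \mid w \supseteq j"\lambda\}$ is $\subseteq$-cofinal and directed-closed there; pick $z$ in their intersection. For each $\delta < \kappa_n$ define
\[
  b_\delta = \bigcup\{\sigma_y(\xi) \mid y \in S,\ j(\sigma_y(\xi)) = j(\sigma)_z(\delta) \cap j(y)\};
\]
a coherence argument mirroring the baby case shows each $b_\delta$ is a partial branch of $d$. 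Applying the first lemma at $z$ via elementarity, deduce that for each $y \in S$ some $\delta < \kappa_n$ satisfies $d_x \cap y = b_\delta \cap y$ for $U_0$-measure-one $x$. Fixing $\bar y \in \mathcal{P}_\mu(\lambda)$ past which distinct $b_\delta$'s disagree and pigeonholing on the $\kappa_n < \mu$ choices of $\delta$, obtain $\delta^*$ and $A^* \in U_0$ with $d_x = b_{\delta^*} \cap x$ for every $x \in A^*$.

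The main obstacle is passing from $U_0$-measure-oneness of $A^* \subseteq \mathcal{P}_{\kappa_0}(\lambda)$ to stationarity of $T_{\delta^*} = \{y \in \mathcal{P}_\mu(\lambda) \mid b_{\delta^*} \cap y = d_y\}$ in $\mathcal{P}_\mu(\lambda)$, since there is no analogue of the sup-map projection supplied in the baby case by Fact~\ref{stationarysups}. The key observation is that $A^* \in U_0$ is automatically stationary in $\mathcal{P}_\mu(\lambda)$ in the Jech sense: every closure condition ``$y$ closed under $f\colon \lambda^{<\omega} \to \lambda$'' admits witnesses of size less than $\kappa_0$, and such witnesses lie in $A^*$ by the measure-one property. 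Combined with the previous steps this yields $T_{\delta^*} \supseteq A^*$ stationary, exhibiting $b_{\delta^*}$ as the sought ineffable branch.
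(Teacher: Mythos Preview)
Your overall architecture is right, but the final step contains a genuine gap. The claim that a $U_0$-measure-one set $A^*\subseteq\mathcal{P}_{\kappa_0}(\lambda)$ is automatically stationary in $\mathcal{P}_\mu(\lambda)$ is false. For instance, $C=\{y\in\mathcal{P}_\mu(\lambda)\mid \kappa_0\subseteq y\}$ is cofinal and closed under unions of ${<}\mu$-chains, hence is a club in $\mathcal{P}_\mu(\lambda)$, and it is disjoint from $\mathcal{P}_{\kappa_0}(\lambda)$. Your justification appeals to the characterization of clubs via closure under a single function $f:\lambda^{<\omega}\to\lambda$, but that characterization is correct only for $\mathcal{P}_{\omega_1}(\lambda)$; for general regular $\mu$ one needs closure under functions $f:[\lambda]^{<\omega}\to\mathcal{P}_\mu(\lambda)$, and with such $f$ one easily forces closed sets to have size $\geq\kappa_0$. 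So $T_{\delta^*}\supseteq A^*$ does not give stationarity of $T_{\delta^*}$, and there is no analogue of Fact~\ref{stationarysups} available simply by inclusion.

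The paper repairs exactly this point by replacing the identity seed $i{\,}"\lambda$ with $z^*:=\bigcup i{\,}"\mathcal{P}_\mu(\lambda)$, represented by a function $g:\mathcal{P}_{\kappa_0}(\lambda)\to\mathcal{P}_\mu(\lambda)$. One then proves directly (Claim~\ref{g-image is stationary}) that $\{g(x)\mid x\in A\}$ is stationary in $\mathcal{P}_\mu(\lambda)$ for every $A\in U_0$: given a club $C$, the directed set $i{\,}"C$ has union $z^*\in i(C)$, while $z^*\in i(\{g(x)\mid x\in A\})$ by definition of $g$. The entire two-lemma argument is then run with $d_{g(x)}$ in place of $d_x$, so that at the end one concludes $d_{g(x)}=b_\delta\cap g(x)$ on a measure-one set, and stationarity of $T_\delta$ follows from the claim. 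A minor secondary point: in your first lemma you should be intersecting with $i(y)$ rather than $i{\,}"y$, since it is $\Lev{i(d)}{i(y)}$ that is enumerated by $i(\sigma_y)$, and reflecting $[x\mapsto d_x\cap y]$ gives $i(d)_{i"\lambda}\cap i(y)$.
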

\begin{proof}
Suppose that $d=\langle d_z\mid z\in \mathcal{P}_\mu(\lambda)\rangle$ is a thin $\mathcal{P}_\mu(\lambda)$-list. Recall for each $z\in\mathcal{P}_\mu(\lambda)$ that the $z$-th level of $d$ is $\Lev{d}{z}=\{z\cap d_y\mid y\supset z\}$.  Since $d$ is thin, we enumerate it as $\{\lev{z}{\xi}\mid \xi<\nu\}$.

For each $n$, let $U_n$ be a normal measure on $\mathcal{P}_{\kappa_n}(\lambda)$. Note that $\lambda=|\mathcal{P}_\mu(\lambda)|$.

Let $i=j_{U_0}$ and set $z^*:=\bigcup i"\mathcal{P}_\mu(\lambda)$, and let $g:\mathcal{P}_{\kappa_0}(\lambda)\rightarrow\mathcal{P}_{\mu}(\lambda)$ be such that $z^*=[g]_{U_0}$. Then $id_{z^*}$ (that is, $i(d)_{z^*}$) is $[x\mapsto d_{g(x)}]_{U_0}$.

\begin{claim}\label{g-image is stationary}
If $A\in U_0$, then $\bar{A}:=\{g(x)\mid x\in A\}$ is stationary in $\mathcal{P}_\mu(\lambda)$.
\end{claim}
\begin{proof}
Suppose that $C$ is a club in $\mathcal{P}_\mu(\lambda)$. Then in $M$, $i"C$ is a directed subset of $i(C)$ of size less than $i(\mu)$. So, $z^*=\bigcup i"\mathcal{P}_\mu(\lambda)=\bigcup i"C\in j(C)$.

Also, by definition of $g$, $z^*\in j(\bar{A})$. So, $C\cap\bar{A}$ is nonempty.
\end{proof}

For all $z\in\mathcal{P}_\mu(\lambda)$, there is some $n$ and $\xi<j(\kappa_n)$, such that $id_{z^*}\cap i(z)=i\lev{j(z)}{\xi}$
Then for some $n$, there is a stationary set $S\subset\mathcal{P}_\mu(\lambda)$, such that for all $z\in S$, there is $\xi<i(\kappa_n)$, $$id_{z^*}\cap i(z)=i\lev{i(z)}{\xi}.$$ Then for all $z\in S$, there is a measure one set $A_z\in U_0$, such that for all $x\in A_z$, there is $\xi<\kappa_n$, such that $d_{g(x)}\cap z=\lev{z}{\xi}$.

Next we want to take a diagonal intersection of the $A_z$'s. To that end, fix a bijection $c:\mathcal{P}_\mu(\lambda)\rightarrow \lambda$.
Let $h$ be a function with domain $\mathcal{P}_{\kappa_0}(\lambda)$, such that $h(x)=\{z\in \mathcal{P}_\mu(\lambda)\mid c(z)\in x\}$.
\begin{claim}
$[h]_{U_0}=j"\mathcal{P}_\mu(\lambda)$.
\end{claim}
\begin{proof}
Clearly, for each $z\in \mathcal{P}_\mu(\lambda)$, $j(z)\in [h]_{U_0}$. For the other direction,
if $[f]_{U_0}\in [h]_{U_0}$, then for $U_0$-almost every $x$, $f(x)=z\in\mathcal{P}_\mu(\lambda)$ for some $z$ with $c(z)\in x$, i.e. $c(f(x))\in x$. By normality, $c\circ f$ is constant on a measure one set, say with value $\alpha$. Setting $z:=c^{-1}(\alpha)$, we have $[f]_{U_0}=j(z)$.
\end{proof}
So let us assume that for all $x\in\mathcal{P}_{\kappa_0}(\lambda)$, $g(x)=\bigcup h(x)=\bigcup\{z\mid c(z)\in x\}$.

Now set $A:=\triangle_{z\in S}A_z=\{x\in\mathcal{P}_{\kappa_0}(\lambda)\mid x\in \bigcap_{c(z)\in x} A_z\}\in U_0$.
Then if $x\in A$, $z\in S$, and $c(z)\in x$, there is $\xi<\kappa_n$, such that $d_{g(x)}\cap z=\lev{z}{\xi}$. As a corollary, we have that for all $z\subset w$, both in $S$, there are $\xi, \delta<\kappa_n$, such that $\lev{z}{\xi}=\lev{w}{\delta}\cap z$.
\bigskip

Next we prove the analogous result from Lemma 2 in Theorem \ref{baby}.
\begin{lemma}
There is a sequence $\langle b_\delta\mid\delta<\kappa_n\rangle$ of (possibly bounded) branches through the list and a measure one set $A'\in U_0$, such that for all $x\in A'$, for all $z\in\mathcal{P}_\mu(\lambda)$ with $c(z)\in x$, there is $\delta<\kappa_n$, such that $d_{g(x)}\cap z=b_\delta\cap z$.

\end{lemma}
\begin{proof}

Let $j=j_{U_{n+1}}:V\rightarrow M$.

By elementarity, $j(A)\subset\mathcal{P}_{\kappa_0}(j(\lambda))$ is in $j(U_0)$ and $jc:\mathcal{P}_{j(\mu)}(j(\lambda))\rightarrow j(\lambda)$ is a bijection. And if $x\in j(A)$ and $z\in j(S)$ is such that $jc(z)\in x$, then there is $\delta<\kappa_n$, such that $jd_{jg(x)}\cap z=j\lev{z}{\delta}$. Now let $u\in j(S)$ be such that $\bigcup j"\mathcal{P}_\mu(\lambda)\subset u$. Then it follows that, for all $z\in S$, there are $\xi, \delta<\kappa_n$, such that $j(\lev{z}{\xi})=j\lev{j(z)}{\xi}=j\lev{u}{\delta}\cap j(z)$.

For each $\delta<\kappa_n$, let $$b_\delta=\bigcup\{\lev{z}{\xi}\mid z\in S, j(\lev{z}{\xi})=j\lev{u}{\delta}\cap j(z)\}.$$ I.e., analogously as before, $b_\delta$ is the pullback of ``the predecessors" of $j\lev{u}{\delta}$. Then
each $b_\delta$ is the union of a coherent sequence of elements of the $z$-th level of $d$ ranging over $z\in\mathcal{P}_\mu(\lambda)$ (it may be bounded). And for each $z\in\mathcal{P}_\mu(\lambda)$, there is $\delta<\kappa_n$, such that $b_\delta\cap z$ is in the $z$-th level of $d$.

\begin{claim}
For all $z\in\mathcal{P}_\mu(\lambda)$, in $M$, for $j(U_0)$-measure one many $x\in\mathcal{P}_{\kappa_0}(j(\lambda))$, there is some $\delta<\kappa_n$, such that
$jd_{jg(x)}\cap j(z)=j(b_\delta\cap z)$.
\end{claim}
\begin{proof}

Fix $z\in \mathcal{P}_\mu(\lambda)$.
Choose $x\in j(A)$, such that $jc(u)\in x$ and there is some $z'\in S$, $z\subset z'$ with $jc(j(z'))=j(c(z'))\in x$. Then there there is some $\delta<\kappa_n$, such that $jd_{jg(x)}\cap u=j\lev{u}{\delta}$, and for some $\xi<\kappa_n$, $jd_{jg(x)}\cap j(z')=j \lev{j(z')}{\xi}=j(\lev{z'}{\xi})=j\lev{u}{\delta}\cap j(z')=j(b_\delta)\cap j(z')$. The last equality is since by definition, $b_\delta\cap z'=\lev{z'}{\xi}$.

So $jd_{jg(x)}\cap j(z)=j(b_\delta)\cap j(z)=j(b_\delta\cap z)$.
\end{proof}

Then by  elementarity, in $V$, for all $z\in\mathcal{P}_\mu(\lambda)$, there is a measure one set $A_z\in U_0$, such that for all $x\in A_z$, there is $\delta<\kappa_n$, such that $d_{g(x)}\cap z=b_\delta\cap z$. Set  $A':=\triangle A_z=\{x\mid x\in\bigcap_{c(z)\in x}A_z\}$. This is as desired.
\end{proof}

Fix the branches $\langle b_\delta\mid\delta<\kappa_n\rangle$ and $A'\in U_0$ as in the above lemma. By passing to a subset of $\kappa_n$ if necessary, assume that for $\eta<\delta$, $b_\eta$ and $b_\delta$ are distinct branches. As before, for $\eta<\delta<\kappa_n$, let $z_{\eta, \delta}$ be such that for all $z\supset z_{\eta, \delta}$, $b_\eta\cap z\neq b_\delta\cap z$.
Let $\bar{z}=\bigcup_{\eta<\delta<\kappa_n}z_{\eta,\delta}\in\mathcal{P}_\mu(\lambda)$.

Let $A''=\{x\in A'\mid c(\bar{z})\in x, g(x)=\bigcup\{z\mid \bar{z}\subset z, c(z)\in x\}\}\in U_0$. Then for all $x\in A''$, there is a unique $\delta<\kappa_n$, such that for all $z\in \mathcal{P}_\mu(\lambda)$, with $c(z)\in x, \bar{z}\subset z$, we have
$d_{g(x)}\cap z=b_\delta\cap z$. It follows that $d_{g(x)}=b_\delta\cap g(x)$.

So the set $T:=\{z\in\mathcal{P}_\mu(\lambda)\mid (\exists \delta<\kappa_n) d_z=b_\delta\cap z\}\supset\{g(x)\mid  x\in A''\}$, which is stationary by Claim \ref{g-image is stationary}. For each $\delta$, let $T_\delta:=\{z\in\mathcal{P}_\mu(\lambda)\mid d_z=b_\delta\cap z\}$. Since $T=\bigcup_{\delta<\kappa_n} T_\delta$, for some $\delta$, $T_\delta$ is stationary.

This completes the proof of Theorem \ref{two cardinal}.
\end{proof}
\section{ISP at the successor of a singular cardinal}
In this section we analyze a somewhat stronger principle at successor of a singular, called ISP.
Let us begin with some definitions.

\begin{definition}
  Let $M \prec H_\theta$ for some $\theta$, and suppose $z \subseteq a$ for some $a \in M$, and $\delta \in M$ is a cardinal.  We say \emph{$M$ $\delta$-approximates $z$} if for all $x \in M \cap \mathcal{P}_\delta(a)$, we have $x \cap z \in M$.

  Let $\delta \leq \mu \leq \lambda$ be cardinals with $\mu$ regular.  A $\mathcal{P}_\mu(\lambda)$-list $\la d_x \ra_{x \in \mathcal{P}_\mu(\lambda)}$ is \emph{$\delta$-slender} if for all cardinals $\theta$ that are sufficiently large, the set
    \[
     \{ M \in H_\theta \mid \text{$M$ $\delta$-approximates }d_{M \cap \lambda}\}
    \]
contains a club.

  We say the principle $\ISP(\delta,\mu,\lambda)$ holds if every $\delta$-slender $\mathcal{P}_\mu(\lambda)$-list has an ineffable branch.
\end{definition}

  This principle is made stronger if $\delta$ is decreased or $\lambda$ is increased.  Note that $\ISP(\mu,\lambda)$ as originally defined by Wei{\ss} in \cite{Weiss} is equivalent to our $\ISP(\aleph_1,\mu,\lambda)$.  Moreover, $\ISP(\mu,\mu,\lambda)$ for all $\lambda$ implies $\ITP_\mu$.

  In \cite{viale-weiss}, Viale and Wei{\ss} gave a characterization of $\ISP$ via  guessing models.  A model $M$ is \defn{$\delta$-guessing} if whenever $M$ $\delta$-approximates $x$ with $x$ a subset of some $a \in M$, then $x$ is $M$-guessed, i.e. there is $b\in M$ such that $b\cap a=x$.  For more on these objects, see \cite{VialeGuessing}.

  Viale and Weiss showed that if $\ISP(\aleph_1,\mu,|H_\theta|)$ holds, then there are stationarily many $\aleph_1$-guessing models $M \prec H_\theta$ with $|M|<\mu$; and $\ISP(\aleph_1,\mu,\lambda)$ holds for all $\lambda \geq \mu$ if and only if there are stationarily many $\aleph_1$-guessing models of size less than $\mu$ in $H_\theta$ for all large $\theta$.  Similarly, we have:

  \begin{fact}\label{ISPgivesguessers}
  $\ISP(\delta,\mu,|H_\theta|)$ implies that there are stationary many $\delta$-guessing models $M\prec H_\theta$ with $|M|<\mu$.
  \end{fact}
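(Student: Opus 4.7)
The plan is to argue by contrapositive, adapting the Viale--Wei{\ss} guessing-model characterization from \cite{viale-weiss} (which treats the case $\delta=\aleph_1$) to general $\delta$. Assume toward contradiction that $\ISP(\delta,\mu,|H_\theta|)$ holds while the set of $\delta$-guessing $M\prec H_\theta$ with $|M|<\mu$ is nonstationary in $\mathcal{P}_\mu(H_\theta)$. Fix a bijection $\pi\colon H_\theta\to\lambda$ with $\lambda=|H_\theta|$, so that on a club of $z\in\mathcal{P}_\mu(\lambda)$, $M_z:=\pi^{-1}[z]$ is an elementary submodel of $H_\theta$ of size $<\mu$. By hypothesis we may arrange that each such $M_z$ fails to be $\delta$-guessing, and pick witnesses $a_{M_z}\in M_z$ and $x_{M_z}\subseteq a_{M_z}$ with $M_z$ $\delta$-approximating $x_{M_z}$ but no element of $M_z$ guessing $x_{M_z}$; after composing with $\pi$ we may take $a_{M_z}=\lambda$, so that $x_{M_z}\subseteq\lambda$.

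Next, define a $\mathcal{P}_\mu(\lambda)$-list $d=\langle d_z\rangle$ by setting $d_z:=x_{M_z}\cap z$ on the club above and $d_z=\emptyset$ elsewhere. The list is $\delta$-slender: for $\theta'$ large enough to see $\pi$, $d$, and $\delta$, a club of $N\prec H_{\theta'}$ has $\delta\subseteq N$, and for any $y\in N\cap\mathcal{P}_\delta(\lambda)$, enumerating $y$ inside $N$ forces $y\subseteq z:=N\cap\lambda$; hence $y\cap d_z=y\cap x_{M_z}\in M_z\subseteq N$ by the $\delta$-approximation property of $x_{M_z}$. Applying $\ISP(\delta,\mu,\lambda)$ produces an ineffable branch $b\subseteq\lambda$, with $S=\{z:b\cap z=d_z\}$ stationary. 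Intersecting $S$ with the club $\{z:\pi(b)\in z\}$ gives a stationary set of $z$ for which $b\in M_z$ and $b$ agrees with $x_{M_z}$ on $M_z\cap\lambda$.

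The last step — and the main obstacle — is to convert this trace agreement into a genuine $M_z$-guess $b\cap\lambda=x_{M_z}$, contradicting the choice of $x_{M_z}$. This is the decoding step of the Viale--Wei{\ss} argument, and its generalization to $\delta$ requires only replacing countable subsets by $<\delta$-sized ones throughout. The delicate point is that $b\cap z=x_{M_z}\cap z$ only gives equality on $M_z\cap\lambda$, while $\delta$-guessing demands equality on all of $\lambda$. To bridge this, I would either (i) refine the choice of witnesses at the outset so that $x_M$ is determined by its trace on $M\cap\lambda$ — e.g.\ by canonically replacing $x_M$ by a trace-minimal witness built from $M$-internal Skolem hulls, while checking that such a witness still violates $\delta$-guessing — or (ii) thicken the encoding so that $d_z$ records the approximation function $y\mapsto y\cap x_{M_z}$ for $y\in M_z\cap\mathcal{P}_\delta(\lambda)$, and then use elementarity of $M_z$ in $H_\theta$ together with $b\in M_z$ to force $b\cap\lambda$ to be the unique element of $M_z$ realizing those traces, which is $x_{M_z}$. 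Once this internalization is carried out, $b\in M_z$ witnesses that $M_z$ is $\delta$-guessing at $x_{M_z}$, yielding the desired contradiction.
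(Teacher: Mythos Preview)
Your overall strategy---encode failures of $\delta$-guessing as a $\delta$-slender list and apply $\ISP$---is exactly the Viale--Wei{\ss} argument, and is what the paper intends: it gives no standalone proof, merely noting that the proof is ``embedded'' in that of the next theorem (where the witnesses $x_z$ are subsets of $\nu\subseteq M_z$, so the issue you raise never surfaces).

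The ``main obstacle'' you flag, however, is an artifact of reading the paper's definition of $M$-guessed too literally. The standard definition (Viale--Wei{\ss} \cite{viale-weiss}, Viale \cite{VialeGuessing}) is the \emph{trace} condition: $x$ is $M$-guessed iff there exists $b\in M$ with $b\cap M=x\cap M$. With that reading, your ineffable branch $b\in M_z$ satisfying $b\cap z=x_{M_z}\cap z$---that is, $b\cap M_z\cap\lambda=x_{M_z}\cap M_z$---is \emph{precisely} a guess for $x_{M_z}$, and the contradiction is immediate. No decoding step, canonical choice of witness, or thickened encoding is required. The paper's phrasing ``$b\cap a=x$'' either abbreviates the trace condition or relies on the special feature of the embedded argument (witnesses contained in $M$) that makes the two coincide.

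Your proposed fixes are thus unnecessary, and as written neither would go through: for (i), replacing $x_{M_z}$ by its trace $x_{M_z}\cap z$ need not preserve $\delta$-approximation, since for $y\in M_z\cap\mathcal{P}_\delta(\lambda)$ one gets $y\cap x_{M_z}\cap z=(y\cap x_{M_z})\cap M_z$, and there is no reason the $M_z$-trace of an element of $M_z$ should lie in $M_z$; and (ii) remains only a gesture. A smaller point: your slenderness argument invokes ``$\delta\subseteq N$ on a club'', which is fine for $\delta<\mu$ but fails at the boundary $\delta=\mu$; this case needs a separate (easy) remark.
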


  Let us now observe a limitation on the extent to which this principle can hold for $\mu$ the successor of a strong limit singular cardinal. In \cite{Sherwood-Dima}, it was shown that the principle $\ISP$ as defined by Wei{\ss} (i.e. $\ISP(\aleph_1,\mu,\lambda)$ for all $\lambda \geq \mu$) cannot hold at the single or at the double successor of a singular strong limit cardinal.  The next theorem generalizes this fact.  Note the proof of Fact~\ref{ISPgivesguessers} is embedded in this argument.

  \begin{theorem} Suppose $\nu$ is a singular strong limit cardinal, and $\mu = \nu^+$.  Then $\ISP(\nu,\mu,2^\nu)$ fails.
  \end{theorem}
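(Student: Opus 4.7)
The plan is to refute $\ISP(\nu,\mu,2^\nu)$ by exhibiting a $\nu$-slender $\mathcal{P}_\mu(2^\nu)$-list with no ineffable branch. As the authors signal, the argument will embed a direct proof of Fact~\ref{ISPgivesguessers}: rather than apply that fact as a black box to obtain $\nu$-guessing models and then contradict them, we build a single list whose structure encodes the contradiction. The driving observation is a cardinality mismatch: for any $M \prec H_\theta$ with $|M| < \mu$, one has $|M| \leq \nu$, whereas $|\mathcal{P}(2^\nu)| = 2^{2^\nu}$.

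Concretely, setting $X_M := \bigcup (M \cap \mathcal{P}_\nu(2^\nu))$, we get $|X_M| \leq \nu$; any subset $x \subseteq 2^\nu \setminus X_M$ is trivially $\nu$-approximated by $M$, since for every $y \in M \cap \mathcal{P}_\nu(2^\nu)$ we have $y \subseteq X_M$ and so $y \cap x = \emptyset \in M$. There are $2^{2^\nu}$ such $x$ but only $\leq \nu$ can lie in $M$, so we can fix a witness $x_M \subseteq 2^\nu$ that $M$ $\nu$-approximates yet does not contain. Using a fixed well-ordering of $H_{(2^\nu)^+}$ to make choices canonical, define $d_z := x_{M_z} \cap z$, where $M_z$ is the canonical elementary submodel satisfying $M_z \cap 2^\nu = z$ on the appropriate club (and $d_z := \emptyset$ off of it). For slenderness: on the club of $M$ with $\nu \subseteq M$ and $M = M_{M \cap 2^\nu}$, elements $y \in M \cap \mathcal{P}_\nu(2^\nu)$ are actually subsets of $M \cap 2^\nu$ (since $\nu \subseteq M$ forces $|y| \subseteq M$, hence $y \subseteq M$), so $y \cap d_{M \cap 2^\nu} = y \cap x_M \in M$ by the $\nu$-approximation property of $x_M$.

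The main obstacle is ruling out ineffable branches. Suppose $b \subseteq 2^\nu$ were one, so $b \cap z = x_{M_z} \cap z$ on a stationary $S \subseteq \mathcal{P}_\mu(2^\nu)$. The plan is to choose the canonicity parameters so that for typical $z \in S$, the model $M_z$ already contains $b$ and $S$ as named parameters; then elementarity forces $b$ to witness $M_z$-guessing of $x_{M_z}$, giving $x_{M_z} \in M_z$ and the desired contradiction. Coordinating the canonical well-ordering with potential ineffable branches—so that any hypothetical branch must in fact slip into one of the $M_z$'s arising from a stationary-set member—is where the real work lies, and where the embedded proof of Fact~\ref{ISPgivesguessers} makes the construction nontrivial.
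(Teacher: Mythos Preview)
Your construction has a fatal flaw that makes the list trivially have an ineffable branch. You take $x_M \subseteq 2^\nu \setminus X_M$ where $X_M = \bigcup(M \cap \mathcal{P}_\nu(2^\nu))$. But for any $\alpha \in M \cap 2^\nu$ the singleton $\{\alpha\}$ lies in $M \cap \mathcal{P}_\nu(2^\nu)$, so $M \cap 2^\nu \subseteq X_M$. Hence $x_M$ is disjoint from $M \cap 2^\nu$, and therefore
\[
d_z = x_{M_z} \cap z = x_{M_z} \cap (M_z \cap 2^\nu) = \emptyset
\]
for every $z$ on your club. The empty set is then an ineffable branch. (A warning sign: your argument never invokes the strong limit hypothesis on $\nu$.)

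The paper's proof avoids this by working with subsets of $\nu$ rather than of $2^\nu$, transported via a bijection $f:2^\nu \to H_\mu$. The strong limit hypothesis is used to show that for $M \prec H_\mu$ with $\nu \subseteq M$, every subset of $\nu$ is $\nu$-approximated by $M$ (since for $a \in M \cap \mathcal{P}_\nu(\nu)$ one has $\mathcal{P}(a) \subseteq M$); cardinality then gives an unguessed $x_z \subseteq \nu$. Setting $d_z = f^{-1}{}''x_z$, one gets $d_z \subseteq z$ because $x_z \subseteq \nu \subseteq M_z$, and the list is nontrivial. This choice also dissolves the circularity you flag at the end: any ineffable branch $b$ satisfies $f{}''b \subseteq \nu$, so $f{}''b$ is a single element of $H_\mu$, and one can simply choose $z$ in the stationary set with $f{}''b \in M_z$ to obtain $x_z = f{}''b \in M_z$, contradicting the choice of $x_z$. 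No delicate coordination of well-orderings with hypothetical branches is needed.
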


  \begin{proof}
Suppose for contradiction $\ISP(\nu,\mu,2^\nu)$ holds; write $\tau = 2^\nu$.  We seek a $\nu$-slender $\mathcal{P}_{\mu}(\tau)$ list $d$ with no ineffable branch.

Note that $2^\nu=|H_\mu|$; fix a bijection $f:\tau \to H_\mu$.  There is a club $C \subseteq \mathcal{P}_{\mu}(H_\mu)$ of $M$ so that $M \prec H_\mu$, $f \in M$, and $M \cap \mu$ is an ordinal $\alpha$ with $\nu < \alpha < \mu$.  Note that no such $M$ can be $\nu$-guessing: Since $\nu$ is strong limit, $2^{<\nu} \subseteq M$, and so every subset of $\nu$ is trivially $\nu$-approximated by $M$; but if $M$ guessed every subset of $\nu$, we'd have $\mathcal{P}(\nu) \subseteq M$, contradicting $|M|<\mu\leq 2^\nu$.

So for each $z \in \mathcal{P}_\mu(\tau)$ such that $f"z \in C$, denote $M_z=f"z$ and let $x_z$ be a subset of $\nu$ that is not $M_z$-guessed.  Then put $d_z = \{\alpha \in \tau \mid f(\alpha) \in x_z\}$.  Since $f \in M_z$, $d_z \subseteq z$, and
(any $\mathcal{P}_\mu(\tau)$-list extending) $\la d_z \ra_{M_z \in C}$ is clearly $\nu$-slender.

Let $b$ be an ineffable branch through $d$.  Since by construction $f"d_z \subseteq \nu$ for a club of $z$, we have $f"b \subseteq \nu$.  So fix $z$ such that $M_z \in C$, $f"b \in M_z$, and $d_z = b \cap z = b$.  But $x_z = f"d_z = f"b \in M_z$ was defined as a subset of $\nu$ that was not $M_z$-guessed.  This is a contradiction.
  \end{proof}

It turns out that the above theorem is sharp. In particular, next we show that a modification of the arguments from the previous section yield $\ISP(\mu,\mu,\lambda)$ when $\mu$ is the successor of a limit of supercompacts (note that in this situation, $2^\nu = \mu$).

\begin{theorem}\label{ISP}
Suppose that $\langle\kappa_n\mid n<\omega\rangle$ is an increasing sequence of supercompact cardinals with limit $\nu$, $\mu=\nu^+$, and $\lambda \geq \mu$. Then $\ISP(\mu,\mu,\lambda)$ holds.
\end{theorem}

\begin{proof}
  We are given $d = \la d_x \mid x \in \mathcal{P}_\mu \lambda \ra$ a $\mu$-slender list.  Let $\theta \gg \lambda$, and fix a $\theta$-supercompactness measure $U_0$ on $\mathcal{P}_{\kappa_0}(\theta)$, with embedding $i=j_{U_0}:V\to M$.  In particular, $i(\kappa_0) > \theta$ and $i"\theta \in M$.

  Let $N^* = \bigcup i" \mathcal{P}_\mu H_\theta$.  Then $N^*\in M$ and in $M$, $|N^*| = i(\nu)$ and $N^* \subseteq H_{i(\theta)}$.  Let $g:\mathcal{P}_{\kappa_0}(\theta) \to \mathcal{P}_\mu H_\theta$ be such that $[g]_{U_0} = N^*$.

  As before, we have that for all clubs $C \subseteq \mathcal{P}_\mu H_\theta$ in $V$, $N^* \in i(C)$.  In particular, by slenderness of $d$, $M$ satisfies that $N^*$ $i(\mu)$-approximates $id_{N^* \cap i(\lambda)}$.  Thus for any $z \in \mathcal{P}_\mu(\lambda)$, we have $i(z) \cap id_{N^*\cap i(\lambda)} \in N^*$.

  It follows that for each $z\in \mathcal{P}_\mu(\lambda)$, there is an elementary substructure $M_z \in \mathcal{P}_\mu(H_\theta)$ such that $z \cup \{z\} \subseteq M_z$, and
  \[i(z) \cap id_{N^*\cap i(\lambda)} \in i(M_z);
  \]
 we may assume $|M_z|=\nu$.  For each such $z$, let us enumerate $\mathcal{P}_\mu(z) \cap M_z$ as $\la \lev{z}{\xi} \mid \xi < \nu \ra$.

  Let $c:\mathcal{P}_\mu(\lambda) \to \theta$ be injective.
  \begin{lemma} There exist $n< \omega$, a stationary $S\subseteq \mathcal{P}_\mu(\lambda)$, and $A \in U_0$ such that for all $z \in S$ and $x \in A$ with $c(z)\in x$, there is some $\xi < \kappa_n$ such that $z \cap d_{g(x) \cap \lambda} = \lev{z}{\xi}$.
  \end{lemma}
  \begin{proof}
    For each $z \in \mathcal{P}_\mu(\lambda)$, there is some $\xi < i(\nu)$ so that $i(z) \cap id_{N^* \cap i(\lambda)} = i\lev{i(z)}{\xi}$; let $n_z$ be least so that $\xi < i(\kappa_n)$.

    The map $z \mapsto n_z$ is constant on a cofinal $S$, say with value $n$.  For each $z \in S$, we have by {\L}o\'{s}'s theorem
      \[
      A_z = \{x \in \mathcal{P}_{\kappa_0} (\theta) \mid \exists \xi < \kappa_n  z \cap d_{g(x) \cap \lambda} = \sigma_{z}(\xi) \} \in U_0.
      \]
As before we wish to take a diagonal intersection of the $A_z$ over $z \in S$.  Recall that we fixed an injective $c:\mathcal{P}_\mu(\lambda) \to \theta$; define $h:\mathcal{P}_{\kappa_0}(\theta) \to \mathcal{P}(\mathcal{P}_\mu(\lambda))$ by
  \[
    h(x) = \{z \in \mathcal{P}_\mu(\lambda) \mid c(z) \in x\}.
  \]
It's not hard to see that $[h] = i"\mathcal{P}_\mu(\lambda)$; and for $U_0$-many $x$, $g(x) \cap \lambda = \bigcup h(x)$.  Then let
  \[
  \triangle_{z \in S} A_z = \{a \in \mathcal{P}_{\kappa_0}(\theta) \mid (\forall z \in S) c(z) \in x \to x \in A_z\}.
  \]
Then $A = \triangle_{z \in S} A_z$ is as in the statement of the lemma.
\end{proof}

Next, we reduce the number of potential branches as before.  Let $j=j_{U_{n+1}}:V\to M'$ witness $\theta$-supercompactness of $\kappa_{n+1}$.  By elementarity, $M'$ satisfies that $j(S)$ is stationary in $\mathcal{P}_{j(\mu)}(j(\lambda))$, and that for all $z \in j(S)$, if $jc(z) \in x \in j(A)$, then $jd_{jg(x)\cap j(\lambda)} \cap j(z) = j\lev{j(z)}{\xi} = j(\lev{z}{\xi})$ for some $\xi < j(\kappa_n)=\kappa_n$.

Let $u \in j(S)$ satisfy $u \supseteq \bigcup j" \mathcal{P}_\mu(\lambda)$.  Then for any $z \in S$, we have $j(z),u \in j(S)$, and there is some $x \in j(A) \subseteq \mathcal{P}_{\kappa_0}(j(\theta))$ so that $jc(j(z))$ and $jc(u)$ are both in $x$.  Since $j(z) \subseteq u$, it follows that for some $\xi,\delta < \kappa_n$,
\[
j(\lev{z}{\xi}) = j\lev{j(z)}{\xi} = jd_{jg(x) \cap j(\lambda)} \cap j(z) = jd_{jg(x)\cap j(\lambda)} \cap j(z) = j\lev{u}{\delta} \cap j(z).
\]
Define, for $\delta < \kappa_n$,
 \[
  b_\delta = \bigcup\{\lev{z}{\xi} \mid j(\lev{z}{\xi}) = j\lev{u}{\delta} \cap j(z)\}.
 \]
We have just shown that for every $z \in S$, that there are some $\xi,\delta<\kappa_n$ with $b_\delta \cap z = \lev{z}{\xi}$. And more precisely, for every $z\in S$ and $U_0$-a.e.\ $x \in \mathcal{P}_{\kappa_0}(\theta)$, there is $\xi<\kappa_n$ such that $d_{g(x)\cap \lambda} \cap z = \lev{z}{\xi}$.

Let $A'_z$ be this measure one set.  We again take the diagonal intersection, $A' = \triangle_{z \in S} A'_z$:
 \[
 A' = \{x \in \mathcal{P}_{\kappa_0}(\theta) \mid (\forall z \in S) c(z) \in x \to \exists \delta<\kappa_n, d_{g(x)\cap\lambda} \cap z = b_\delta \cap z\}.
 \]
Again, by passing to a subset of $\kappa_n$ if necessary, we assume that for all $\delta<\eta$, $b_\delta\neq b_\eta$. Take $\bar{z}$ ``above all the splitting'', so that $b_\eta \cap \bar{z} \neq b_\delta \cap \bar{z}$ for all $\eta \neq \delta < \kappa_n$.  Let
\[
 T = \{z \in \mathcal{P}_\mu(\lambda) \mid \bar{z} \subseteq z \text{ and for some }\delta<\kappa_n, \; d_z = b_\delta \cap z\}.
\]
Now by the above remarks, there are measure one many $x \in A'$ so that $d_{g(x) \cap \lambda} = d_{\bigcup \{z \mid c(z) \in x\}}$.  Thus $T \supset\{g(x)\cap \lambda \mid x \in A'\}$, and so by (the same argument as in) Claim~\ref{g-image is stationary}, $T$ is a stationary set.  Again, there is some $\delta$ so that $T_\delta = \{z \in \mathcal{P}_\mu(\lambda) \mid d_z = b_\delta \cap z\}$ is stationary.  This $b_\delta$ is the desired ineffable branch.
\end{proof}

Combining Fact~\ref{ISPgivesguessers} with the previous two theorems, we obtain the following Corollary, answering Questions 8.2 and 8.3 of Viale in \cite{VialeGuessing}.

\begin{cor} Suppose $\mu =\nu^+$ with $\nu$ the limit of $\omega$-many supercompacts.  Then for all cardinals $\theta$ taken sufficiently large, there are stationarily many $\mu$-guessing models of size $\nu$ in $H_\theta$; and none of these is $\delta$-guessing for any $\delta \leq \nu$.
\end{cor}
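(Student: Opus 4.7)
The corollary has two parts, and the plan is to derive both directly from the preceding machinery. For the existence of stationarily many $\mu$-guessing models of size $\nu$ in $H_\theta$, I will apply Theorem~\ref{ISP} with $\lambda = |H_\theta|$ to obtain $\ISP(\mu,\mu,|H_\theta|)$, and then invoke Fact~\ref{ISPgivesguessers} (with $\delta=\mu$) to extract stationarily many $\mu$-guessing $M\prec H_\theta$ with $|M|<\mu$. To upgrade ``size ${<}\mu$'' to ``size exactly $\nu$'', I will intersect with the club of $M$ having $\nu\subseteq M$; this forces $|M|\geq\nu$, and combined with $|M|<\mu=\nu^+$ we obtain $|M|=\nu$. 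This half is essentially bookkeeping.

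For the second part, the key observation is the following monotonicity of the guessing property: if $\delta_1\leq\delta_2$ and $M$ is $\delta_1$-guessing, then $M$ is $\delta_2$-guessing, because any $\delta_2$-approximated subset is \emph{a fortiori} $\delta_1$-approximated (the test family $M\cap\mathcal{P}_{\delta_1}(a)$ is smaller). It therefore suffices to show that none of the models produced in the first part can be $\nu$-guessing. For this I plan to re-run the argument from the proof of the preceding failure theorem: that argument exhibited, for every $M\prec H_\theta$ in a certain club (containing a fixed bijection $f:2^\nu\to H_\mu$ and satisfying $M\cap\mu$ an ordinal strictly between $\nu$ and $\mu$), a subset $x\subseteq\nu$ which is $\nu$-approximated by $M$ (since $\nu$ is strong limit) but not $M$-guessed (since $|M|=\nu<2^\nu$), hence witnessing that $M$ is not $\nu$-guessing.

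The only organizational point — which I anticipate as the main, albeit minor, obstacle — is to confirm that this additional club condition imposed by the failure argument can be intersected with the stationary family coming from Fact~\ref{ISPgivesguessers} without losing stationarity; this is immediate, since the intersection of a stationary set with a club is stationary. Lining the two sets of models up in this way, every $M$ in the intersection is simultaneously $\mu$-guessing of size $\nu$ and not $\nu$-guessing, and hence by the monotonicity above not $\delta$-guessing for any $\delta\leq\nu$, which is exactly the content of the corollary.
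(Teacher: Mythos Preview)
Your proposal is correct and matches the paper's approach: the paper simply states that the corollary follows by combining Fact~\ref{ISPgivesguessers} with the two preceding theorems, and you have correctly unpacked how that combination works, including the monotonicity of $\delta$-guessing and the intersection with the club of $M$ containing $\nu$ to pin down $|M|=\nu$. The only superfluous detail is the bijection $f:2^\nu\to H_\mu$ from the failure proof; for the second half you need only that $\nu\subseteq M$ (so $2^{<\nu}\subseteq M$) and $|M|=\nu<2^\nu=\mu$, which already follow from the club condition you impose.
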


\section{ITP at $\aleph_{\omega+1}$}
We next show, assuming the existence of infinitely many supercompacts, that it is consistent for $\aleph_{\omega+1}$ to have the super tree property.  We begin by showing it is consistent to have $\ITP(\aleph_{\omega+1},\aleph_{\omega+1})$.  The forcing will almost be the same as that used by Neeman \cite{Neeman} to obtain the tree property.  We first take the product of Levy collapses to turn $\kappa_{n}$ into $\kappa_0^{+n}$; we then show there exists some inaccessible $\rho<\kappa_0$ so that collapsing to make $\rho^{+\omega+1}$ become $\aleph_1$, and $\kappa_0$ become $\aleph_3$, forces the tree property at $\aleph_{\omega+1}$.  In fact the argument will show there are measure one many (in the normal measure on $\kappa_0$ induced by our supercompactness measure) such $\rho$.

Suppose that $\langle\kappa_n\mid n<\omega\rangle$ is an increasing sequence of indestructibly supercompact cardinals, $\nu:=\sup_n\kappa_n$, $\mu=\nu^+$.

For a successor cardinal $\tau=\delta^+$, where $\delta<\kappa_0$ is a singular cardinal of countable cofinality, let $\mathbb{L}_\tau:=\Col(\omega, \delta)\times \Col(\tau^+, {<}\kappa_0)$.  Note that forcing with $\mathbb{L}_\tau$ makes $\tau$ into $\aleph_1$ and $\kappa_0$ into $\aleph_3$ in the extension.

\begin{theorem}\label{forcebaby}
Let $H=\prod_{n}H_n$ be $\prod_{n<\om} \Col(\kappa_n, {<}\kappa_{n+1})$-generic over $V$.  Then there is a $\tau<\kappa$ such that $\tau = \delta^+$ for $\delta$ a strong limit cardinal of cofinality $\omega$, and in the extension of $V[H]$ by $\mathbb{L}_\tau$, $\ITP(\mu, \mu)$ holds.
\end{theorem}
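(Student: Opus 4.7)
The plan is to replay the two-step ultrapower argument of Theorem~\ref{baby} inside the extension $V[H][G]$, with the supercompactness embeddings lifted through the forcing via Laver-indestructibility of the $\kappa_n$ and a master-condition construction analogous to Neeman's \cite{Neeman}.

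First, since $\mathbb{P} = \prod_n \Col(\kappa_n,{<}\kappa_{n+1})$ is $\kappa_n$-directed closed for every $n$, each $\kappa_n$ remains supercompact in $V[H]$. I fix in $V[H]$ ultrapower embeddings $i : V[H] \to M_0$ derived from a normal measure $U_0$ on $\mathcal{P}_{\kappa_0}(\mu)$ and $j_{n+1} : V[H] \to M_{n+1}$ derived from $U_{n+1}$ on $\mathcal{P}_{\kappa_{n+1}}(\mu)$. For any candidate $\tau = \rho^{+\omega+1}$ with $\rho < \kappa_0$ inaccessible, the forcing $\mathbb{L}_\tau$ factors as $\Col(\omega, \rho^{+\omega}) \times \Col(\tau^+, {<}\kappa_0)$; the second factor is $\tau^+$-directed closed, and $|\mathbb{L}_\tau| < \kappa_{n+1}$ for all $n \geq 0$. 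Hence each $j_{n+1}$ extends to a lift $j_{n+1}^+ : V[H][G] \to M_{n+1}^+$ by the standard small-forcing argument.

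The delicate step is lifting $i$. Since $\Col(\omega, \rho^{+\omega})$ is not $\kappa_0$-closed, $\kappa_0$ fails to be supercompact in $V[H][G]$. The strategy is instead to use reflection from $M_0$: the tail forcing $\Col(\tau^+, [\kappa_0, i(\kappa_0)))^{M_0}$ is $\tau^+$-directed closed in $M_0$ and, using the $\mu$-closure of $M_0$ in $V[H]$, one may build in $V[H][G]$ a master condition below every element of $i"G$. Projecting $U_0$ to a normal measure $U_0^\kappa$ on $\kappa_0$ and invoking elementarity, one shows that for a $U_0^\kappa$-measure-one set of $\rho < \kappa_0$ the choice $\tau = \rho^{+\omega+1}$ admits such a master condition, yielding a lift $i^+ : V[H][G] \to M_0^+$. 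Fixing any such $\rho$ produces the required $\tau$.

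With both $i^+$ and the $j_{n+1}^+$ in hand, the proof of Theorem~\ref{baby} transfers to $V[H][G]$: given a thin $\mu$-list $d$, apply $i^+$ to find $n < \omega$, a cofinal $S \subseteq \mu$, and $A \in U_0^+$ so that $d_{\sup x} \cap \alpha$ has index below $\kappa_n$ on level $\alpha$ for $x \in A$ and $\alpha \in x \cap S$; use a top node $\gamma \in j_{n+1}^+(S)$ above $\sup j_{n+1}^+"\mu$ to define $\kappa_n$-many candidate branches $b_\delta$ by pullback; and extract a single ineffable $b_\delta$ via Fact~\ref{stationarysups}, now applied to the image of the normal measure $U_0^\kappa$ in $V[H][G]$. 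The main obstacle throughout is the master-condition construction lifting $i$ to $V[H][G]$: this requires a careful analysis of $\mathbb{L}_\tau$-names for $d$ and of the $\kappa_0$-directed-closed tail $\Col(\tau^+, {<}\kappa_0)$, and is precisely the content of the claim in the introduction to this section that there are $U_0^\kappa$-measure-one many working $\rho$.
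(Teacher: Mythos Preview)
Your lifting of $i$ does not work. Every condition in $\mathbb{L}_\tau$ has size below $\kappa_0 = \crit(i)$, so $i$ fixes conditions pointwise and $i``G = G$; there is no nontrivial master condition to build. What lifting actually requires is an $M_0[G]$-generic for the tail $\Col(\tau^+,[\kappa_0,i(\kappa_0)))^{M_0}$, and this cannot exist in $V[H][G]$: if it did, $i$ would lift to an embedding with domain $V[H][G]$ and critical point $\kappa_0$, making $\kappa_0$ measurable there, whereas $\kappa_0 = \aleph_3$ is a successor in $V[H][G]$. The $\mu$-closure of $M_0$ in $V[H]$ does not help, since the tail is only $\tau^+$-closed while there are far more than $\mu$ dense sets to meet. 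The reflection you invoke (``measure-one many $\rho$ work'') is a \emph{conclusion} of the paper's argument, not an input to a master-condition construction.

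The paper avoids lifting $i$ through $\mathbb{L}_\tau$ altogether. It argues by contradiction, assuming that for every candidate $\tau$ there is a bad $\mathbb{L}_\tau$-name $\dot d^\tau$, and works in $V[H]$ with these names, exploiting $[x\mapsto\mathbb{L}_{\mu_x}]_{U_0} = \mathbb{L}_\mu$ to reflect the behavior of $i\dot d^\mu_{\sup i``\mu}$ down to conditions $(p_x,q_x)\in\mathbb{L}_{\mu_x}$. These conditions are then organized into a \emph{system} $\langle R_s\rangle_{s\in I}$ on $S\times\kappa_n$ indexed by triples $s=(\tau,p,q)$. The higher embedding $j_{U_{n+2}}$---lifted only in a further extension $V[H][H^*]$ by $\Col(\kappa_{n+1}, j(\kappa_{n+3}))$---produces a system of branches, and Neeman's branch-preservation lemma pulls cofinal ones back to $V[H]$. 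A second, more delicate pass (Lemma~\ref{forcelemma3}) shows that the $V[H]$-branches already account for $\dot d^{\mu_x}_{\sup x}$ on a $U_0$-measure-one set, after which the ineffable branch is extracted as in Theorem~\ref{baby}. Your outline omits the system machinery entirely; it is precisely the device that substitutes for the unavailable lift of $i$.

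A minor side point: $\prod_n \Col(\kappa_n,{<}\kappa_{n+1})$ is not $\kappa_n$-directed closed for $n\ge 1$; the factors below stage $n$ are small rather than closed, and one needs the product factorization together with indestructibility to see that each $\kappa_n$ stays supercompact in $V[H]$.
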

Work in $V[H]$.  Supposing otherwise, we have, for every such $\tau<\kappa_0$, a $\mathbb{L}_\tau$ name for a thin $\mu$-list $\dot{d}^\tau$ forced by $\mathbb{L}_\tau$ to have no ineffable branch.  Assume that for all $\alpha<\mu$, $\1_{\mathbb{L}_\tau}$ forces that the $\alpha$-th level of $\dot{d}^\tau$ is enumerated by the names $\{\flev{\tau}{\alpha}{\xi}\mid \xi<\nu\}$; we may furthermore assume that for sufficiently large $\alpha < \mu$, it is forced that there are no repetitions in the sequence $\la \sigma^\tau_\alpha(\xi)\mid\xi<\nu\ra$.

By indestructibility, let $U_0$ be a normal measure on $\mathcal{P}_{\kappa_0}(\mu)$ in $V[H]$, and for each $n>0$, let $U_n$ be a normal measure on $\mathcal{P}_{\kappa_n}(\mu)$ in $V$.

Let $i=j_{U_0}:V \to M_0$ be the ultrapower embedding; for ease of notation we set $\kappa := \kappa_0$.  Recall for $x \in \mathcal{P}_{\kappa}(\mu)$ that $\kappa_x = \sup (x \cap \kappa)$, (which is just $x\cap\kappa$ on a measure one set), and $[x \mapsto \kappa_x]_{U_0} = \kappa$.  Therefore $x \mapsto \kappa_x^{+\omega+1}$ represents $\mu = \kappa^{+\omega+1}$ in $M_0$.  Let $\mu_x$ denote $\kappa_x^{+\omega+1}$ in what follows.
\begin{lemma}\label{forcelemma1}
There exist $n<\omega$, an unbounded $S\subset\mu$, and $A\in U_0$ and a map $x\mapsto (p_x,q_x)$, such that for all $x\in A$ and $\alpha\in x\cap S$, there is $\xi<\kappa_n$, such that $(p_x,q_x)\Vdash_{\mathbb{L}_{\mu_x}} \dot{d}^{\mu_x}_{\sup x}\cap \alpha=\flev{\mu_x}{\alpha}{\xi}$.
\end{lemma}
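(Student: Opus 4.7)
The plan is to lift Lemma~1 of Theorem~\ref{baby} to the forcing setting, using the $\mu^+$-closure of the second factor of $\mathbb{L}_\mu^{M_0}$ to reduce the relevant ordinal-valued forcing name to a name on the small first factor. Working in $M_0 = \mathrm{Ult}(V[H], U_0)$ with $i = j_{U_0}$, set $\dot{D} = i(\la \dot{d}^\tau \ra_\tau)(\mu)$, an $\mathbb{L}_\mu^{M_0}$-name for a thin $i(\mu)$-list, and let $\beta = \sup i"\mu$. By thinness, for each $\alpha < \mu$ there is a canonical $\mathbb{L}_\mu^{M_0}$-name $\dot{\xi}_\alpha$ for the unique $\xi < i(\nu)$ with $\dot{D}_\beta \cap i(\alpha) = \flev{\mu}{i(\alpha)}{\xi}$. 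By continuity ($\cof\nu = \omega < \crit i$), $i(\nu) = \sup_n i(\kappa_n)$, so $\dot{\xi}_\alpha$ is forced below some $i(\kappa_n)$.

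Factor $\mathbb{L}_\mu^{M_0} = \Col(\omega,\nu)^{M_0} * \Col(\mu^+, {<}i(\kappa_0))^{M_0}$; since $|\Col(\omega,\nu)^{M_0}| = \nu < \mu^+$, the second factor remains $\mu^+$-closed over $M_0[G_0]$ and so adds no new $\mu$-sequence of $M_0[G_0]$-elements. For any generic $G_0 \times G_1$, the function $\alpha \mapsto \dot{\xi}_\alpha^{G_0 \times G_1}$ is such a $\mu$-sequence of $M_0$-ordinals, hence lies in $M_0[G_0]$. Thus there is a $\Col(\omega,\nu)^{M_0}$-name $\dot{H} \in M_0$ representing it, with $\1_{\mathbb{L}_\mu^{M_0}} \Vdash \dot{\xi}_{\check\alpha} = \dot{H}(\check\alpha)$ for every $\alpha < \mu$.

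Now pigeonhole in $M_0$: for each $\alpha < \mu$, density lets one choose $(p_\alpha, \xi_\alpha)$ with $p_\alpha \Vdash_{\Col(\omega,\nu)^{M_0}} \dot{H}(\check\alpha) = \check{\xi}_\alpha$; let $n_\alpha$ be least with $\xi_\alpha < i(\kappa_{n_\alpha})$. Since only $|\Col(\omega,\nu)^{M_0}|\cdot\omega = \nu < \mu$ pairs $(p,n)$ exist and $\mu$ is regular, some $(p,n)$ makes $S = \{\alpha < \mu : (p_\alpha, n_\alpha) = (p,n)\}$ unbounded in $\mu$. Writing $\1$ for the trivial condition in the second factor, the condition $(p,\1) \in \mathbb{L}_\mu^{M_0}$ forces, for each $\alpha \in S$, the equality $\dot{D}_\beta \cap i(\alpha) = \flev{\mu}{i(\alpha)}{\xi_\alpha}$ with $\xi_\alpha < i(\kappa_n)$.

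Finally, pull back via \L o\'s. Choose any function $x \mapsto (p_x, q_x)$ in $V[H]$ representing $(p,\1)$ in the ultrapower, and for each $\alpha \in S$, pick $x \mapsto \xi_\alpha^x$ representing $\xi_\alpha$. Translating parameters --- $\beta \leftrightarrow \sup x$, $\dot{D}_\beta \leftrightarrow \dot{d}^{\mu_x}_{\sup x}$, $i(\alpha) \leftrightarrow \alpha$, $i(\kappa_n) \leftrightarrow \kappa_n$, and $\flev{\mu}{i(\alpha)}{\xi_\alpha} \leftrightarrow \flev{\mu_x}{\alpha}{\xi_\alpha^x}$ --- the assertion above for each fixed $\alpha \in S$ gives a $U_0$-measure one set $A_\alpha$ on which $(p_x, q_x) \in \mathbb{L}_{\mu_x}$ and $(p_x, q_x) \Vdash_{\mathbb{L}_{\mu_x}} \dot{d}^{\mu_x}_{\sup x} \cap \alpha = \flev{\mu_x}{\alpha}{\xi_\alpha^x}$ with $\xi_\alpha^x < \kappa_n$. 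Setting $A = \triangle_{\alpha \in S} A_\alpha \in U_0$, the quadruple $(n, S, A, x \mapsto (p_x, q_x))$ witnesses the lemma. The main obstacle is the reduction in paragraph two: without the $\mu^+$-closure of the second factor, $\alpha \mapsto \xi_\alpha$ could essentially require both factors of $\mathbb{L}_\mu^{M_0}$ to decide, blocking the pigeonhole on the small first factor.
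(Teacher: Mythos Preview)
Your overall strategy matches the paper's: work in the ultrapower, examine $i\dot{d}^\mu_{\sup i"\mu}$, find conditions bounding the level-index below some $i(\kappa_n)$, pigeonhole on the small first factor, then pull back via \L o\'s and diagonally intersect. The pigeonhole step and the final pullback are fine.

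The gap is in your second paragraph. From the $\mu^+$-distributivity of $\Col(\mu^+,{<}i(\kappa_0))$ over $M_0[G_0]$ you correctly conclude that, for each generic $G_0\times G_1$, the sequence $\langle\dot\xi_\alpha^{G_0\times G_1}\rangle_{\alpha<\mu}$ lies in $M_0[G_0]$. But this does \emph{not} mean the sequence is independent of $G_1$: different $G_1$'s over the same $G_0$ can yield different elements of $M_0[G_0]$. (Distributivity says every new $\mu$-sequence of ground-model sets was already in the ground model; it does not say each $\mathbb{Q}$-name for such a sequence is decided by $\1_{\mathbb{Q}}$.) So there is no reason a single $\Col(\omega,\nu)$-name $\dot H$ exists with $\1_{\mathbb L_\mu}\Vdash\dot\xi_\alpha=\dot H(\alpha)$, and your subsequent use of the condition $(p,\1)$ is unjustified. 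As a side note, ``remains $\mu^+$-closed over $M_0[G_0]$'' is not what Easton's lemma gives from $|\mathbb{P}|<\mu^+$; you only get $\mu^+$-distributivity.

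The fix is exactly what the paper does: instead of trying to reduce to a pure first-factor name, for each $\alpha$ choose $(p_\alpha,q_\alpha)\in\mathbb L_\mu$ deciding $\dot\xi_\alpha$ as some specific ordinal below some $i(\kappa_{n_\alpha})$, and use the $\mu^+$-closure of $\Col(\mu^+,{<}i(\kappa_0))$ \emph{in $M_0$} to arrange $\langle q_\alpha\rangle_{\alpha<\mu}$ decreasing. Then pigeonhole on the pairs $(p_\alpha,n_\alpha)$ (of which there are only $\nu$ many) to get a fixed $(p,n)$ on an unbounded $S$, take $q$ to be a lower bound of all the $q_\alpha$, and proceed exactly as in your last paragraph with $(p,q)$ in place of your $(p,\1)$.
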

\begin{proof}
By the above remarks, $[x\mapsto\mathbb{L}_{\mu_x}]_{U_0}=\Col(\omega, \nu)\times \Col(\mu^+, {<}i(\kappa))=\mathbb{L}_\mu$.  Now $i\dot{d}^\mu_{\sup i"\mu}=[x\mapsto\dot{d}^{\mu_x}_{\sup x}]_{U_0}$. For all $\alpha<\mu$, there is some $n_\alpha$, $\xi<i(\kappa_n)$, and $(p_\alpha,q_\alpha)\in\mathbb{L}_\mu$ such that $(p_\alpha,q_\alpha)\Vdash i\dot{d}^\mu_{\sup i"\mu}\cap i(\alpha)=i\flev{\mu}{i(\alpha)}{\xi}$.  Choosing the $q_\alpha$ inductively, we arrange that $\langle q_\alpha\mid \alpha<\mu\rangle$ is decreasing.

Note that $p_\alpha \in \Col(\omega,\nu)$ are finite conditions; then there are an unbounded $S\subset \mu$ and fixed $n$ and $p$ such that for all $\alpha\in S$, $p_\alpha=p$ and $n_\alpha=n$.  By $\mu$-closure of $\Col(\mu^+,{<}i(\kappa))$, we can take $q$ to be a common strengthening of the $q_\alpha$. Let $[x\mapsto p_x]_{U_0}=p$ and $[x\mapsto q_x]_{U_0}=q$.

Then for all $\alpha\in S$, there is a measure one set $A_\alpha\in U_0$ such that for all $x\in A_\alpha$, there is $\xi<\kappa_n$, such that $(p_x,q_x)\Vdash_{\mathbb{L}_{\mu_x}} \dot{d}^{\mu_x}_{\sup x}\cap \alpha=\flev{\mu_x}{\alpha}{\xi}$.  Note that it follows that, for all $\alpha<\beta$ both in $S$, there are $\xi,\eta<\kappa_n$, $\tau<\kappa_0$ and $(p,q)\in \mathbb{L}_\tau$ such that $(p,q)\Vdash \flev{\tau}{\alpha}{\xi}=\flev{\tau}{\beta}{\eta}\cap\alpha$; this will be witnessed by any $x \in A$ with $\alpha,\beta \in x$.

Set $A:=\triangle_{\alpha\in S}A_\alpha$. Then $S, A$ are as desired.
\end{proof}
Fix $n, S,A, x\mapsto (p_x, q_x)$ as in the conclusion of the above lemma.

Much as in \cite{MagidorShelah} and later \cite{Neeman}, we require the notion of a \emph{system}.
\begin{definition}
Let $D \subseteq \Ord$, $\rho \in \Ord$, and $I$ be an index set.  A \defn{system on $D \times \rho$} is a family $\la R_s \ra_{s \in I}$ of transitive, reflexive relations on $D \times \rho$, so that
  \begin{enumerate}
    \item If $( \alpha, \xi ) R_s ( \beta, \zeta)$ and $( \alpha, \xi ) \neq ( \beta, \zeta)$ then $\alpha < \beta$.
    \item If $( \alpha_0, \xi_0)$ and $( \alpha_1, \xi_1)$ are both $R_s$-below $( \beta, \zeta)$, then $( \alpha_0,\xi_0)$ and $(\alpha_1,\xi_1)$ are comparable in $R_s$.
    \item For every $\alpha < \beta$ both in $D$, there are $s \in I$ and $\xi,\zeta \in \rho$ so that $( \alpha, \xi ) R_s ( \beta, \zeta )$.
  \end{enumerate}
A \defn{branch} through $R_s$ is a subset of $D \times \rho$ that is linearly ordered by $R_s$ and downwards $R_s$-closed (in particular, a branch is a partial function $b:D \rightharpoonup \rho$).
A \defn{system of branches through $\la R_s \ra_{s \in I}$} is a family $\la b_\eta \ra_{\eta \in J}$ so that each $b_\eta$ is a branch through some $R_{s(\eta)}$, and $D = \bigcup_{\eta \in J} \dom(b_\eta)$.
\end{definition}
As before, branches in a system need not be cofinal; however, note that now a branch $b_\eta$ through $R_s$ is cofinal iff $\dom(b_\eta)$ is cofinal in $D$.

Let $I=\{(\tau, p,q)\mid \tau<\kappa, \tau = \delta^+$ for some singular strong limit of countable cofinality $\delta$, and $(p,q)\in\mathbb{L}_\tau\}$.  Restricting to a final segment if necessary, we can assume for all $\alpha \in S$ that it is forced by $\1_{\mathbb{L}_\tau}$ that $\dot\sigma^\tau_\alpha(\xi) \neq \dot\sigma^\tau_\alpha(\eta)$ whenever $\xi \neq \eta$ are in $\kappa_n$.

For all $s=(\tau,p,q)\in I$, define the relation $R_s$ on $S\times\kappa_n$ by $(\alpha, \xi)R_s(\beta, \eta)$ iff $\alpha \leq \beta$ and $(p,q)\Vdash_{\mathbb{L}_\tau}\flev{\tau}{\alpha}{\xi}=\flev{\tau}{\beta}{\eta}\cap\alpha$.
\begin{prop}
$\la R_s \ra_{s \in I}$ is a system on $S\times \kappa_n$.

\end{prop}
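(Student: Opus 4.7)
The plan is to verify the four defining properties of a system directly from the definition of $R_s$, drawing on what was established in Lemma~\ref{forcelemma1}. Each property is bookkeeping; the substantive content was packaged into the lemma, so this step is almost entirely a check that the relations have been defined to satisfy the axioms.

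Reflexivity of each $R_s$ is immediate: $\flev{\tau}{\alpha}{\xi}$ is forced to be an element of the $\alpha$-th level of $\dot{d}^\tau$, hence a subset of $\alpha$, so $(p,q)\Vdash_{\mathbb{L}_\tau}\flev{\tau}{\alpha}{\xi}=\flev{\tau}{\alpha}{\xi}\cap\alpha$. Transitivity follows by chaining: if $(p,q)$ forces $\flev{\tau}{\alpha}{\xi}=\flev{\tau}{\beta}{\eta}\cap\alpha$ and $\flev{\tau}{\beta}{\eta}=\flev{\tau}{\gamma}{\zeta}\cap\beta$, then it forces $\flev{\tau}{\gamma}{\zeta}\cap\alpha=(\flev{\tau}{\gamma}{\zeta}\cap\beta)\cap\alpha=\flev{\tau}{\beta}{\eta}\cap\alpha=\flev{\tau}{\alpha}{\xi}$. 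For the strict-inequality clause in property (1), $\alpha\leq\beta$ is built into the definition of $R_s$; the case $\alpha=\beta$ with distinct second coordinates would force $\flev{\tau}{\alpha}{\xi}=\flev{\tau}{\alpha}{\eta}$, ruled out by our assumption (after restricting $S$ to a final segment) that $\1_{\mathbb{L}_\tau}$ forces the enumeration of each level to be repetition-free.

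For property (2), suppose $(\alpha_0,\xi_0)$ and $(\alpha_1,\xi_1)$ both sit $R_s$-below $(\beta,\zeta)$, and say $\alpha_0\leq\alpha_1$. Intersecting the two forced identities $\flev{\tau}{\alpha_i}{\xi_i}=\flev{\tau}{\beta}{\zeta}\cap\alpha_i$ with $\alpha_0$ yields $(p,q)\Vdash\flev{\tau}{\alpha_0}{\xi_0}=\flev{\tau}{\alpha_1}{\xi_1}\cap\alpha_0$, which is exactly $(\alpha_0,\xi_0)R_s(\alpha_1,\xi_1)$. This handles both subcases $\alpha_0<\alpha_1$ and $\alpha_0=\alpha_1$ (in the latter, the identity combined with repetition-freeness forces $\xi_0=\xi_1$, so the pairs are equal and trivially comparable).

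Finally, property (3) is the only clause with content beyond the definition, and it was set up at the end of Lemma~\ref{forcelemma1}. Given $\alpha<\beta$ in $S$, fineness of $U_0$ lets us pick $x\in A$ with $\{\alpha,\beta\}\subseteq x$; then $\tau=\mu_x$ has the required form (successor of a strong limit of cofinality $\omega$ below $\kappa$), $(p,q)=(p_x,q_x)\in\mathbb{L}_\tau$, and the remark at the end of the lemma supplies $\xi,\eta<\kappa_n$ such that $(p,q)\Vdash_{\mathbb{L}_\tau}\flev{\tau}{\alpha}{\xi}=\flev{\tau}{\beta}{\eta}\cap\alpha$, witnessing $(\alpha,\xi)R_s(\beta,\eta)$ for $s=(\tau,p,q)\in I$. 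I do not anticipate any genuine obstacle here: the proposition is essentially the observation that the forcing-definable relations $R_s$ satisfy the system axioms by design.
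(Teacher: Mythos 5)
Your proof is correct and takes the same approach as the paper, which simply notes that (1) and (2) are immediate from the definition of $R_s$ and the assumed repetition-freeness of the level enumerations, while (3) follows from the remark at the end of Lemma~\ref{forcelemma1}. You have merely spelled out the routine verifications (reflexivity, transitivity, the downward-comparability clause via intersecting forced identities) that the paper leaves to the reader.
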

\begin{proof}
That (1) and (2) hold is immediate by definition and the preceding paragraph, and the above lemma gives (3).
\end{proof}
\begin{lemma}\label{forcelemma2}
There exists, in $V[H]$, an unbounded $S' \subseteq S$ and a system of branches $\langle b_{s,\delta}\mid s\in I,\delta<\kappa_n\rangle$ through $\la R_s \rst S' \times \kappa_n \ra_{s \in I}$ such that each $b_{s,\delta}$ is a branch through $R_s \rst S' \times \kappa_n$.
\end{lemma}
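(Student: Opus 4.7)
Following the blueprint of Lemma~2 in Theorem~\ref{baby}, the plan is to construct the branches using the supercompact embedding $j = j_{U_{n+1}}$ and a well-chosen top node. A combinatorial payoff of using $U_{n+1}$ rather than $U_0$ is that $\crit(j) = \kappa_{n+1} > \kappa_0$, so the resulting witnesses will automatically lie in $I$ rather than merely in $j(I)$.

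First, using indestructibility of $\kappa_{n+1}$ and standard master-condition arguments (as in Neeman~\cite{Neeman}), lift $j\colon V \to M'$ to $j\colon V[H] \to M'[H^*]$ with $j \in V[H]$. Since $\crit(j) > \kappa_0$, $j$ pointwise fixes every $s = (\tau,p,q) \in I$ together with the poset $\mathbb{L}_\tau$, and $j(\kappa_0) = \kappa_0$. A key consequence is that $j(U_0)$ is a measure on $\mathcal{P}_{\kappa_0}(j(\mu))$, so for every $x \in \mathcal{P}_{\kappa_0}(j(\mu))$ we have $\kappa_x = \sup(x \cap \kappa_0) < \kappa_0$, whence $(\mu_x, p_x, q_x) \in I$ (not merely $j(I)$).

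Next, fix a top node $\gamma \in j(S) \setminus \sup j''\mu$; this exists since $|j''\mu| = \mu < j(\mu)$ and $j(S)$ is stationary in $j(\mu)$. For each $s = (\tau,p,q) \in I$ and each $\delta < \kappa_n$, define a partial function $b_{s,\delta}\colon S \rightharpoonup \kappa_n$ by
\[
b_{s,\delta}(\alpha) = \xi \iff (p,q) \Vdash^{M'[H^*]}_{\mathbb{L}_\tau} j(\dot\sigma^\tau_\alpha(\xi)) = j(\dot\sigma^\tau)_\gamma(\delta) \cap j(\alpha);
\]
the assumed non-repetition of $\dot\sigma^\tau_\alpha$ makes $\xi$ unique when it exists. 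The branch property follows as in Theorem~\ref{baby}: if $\alpha < \beta$ both lie in $\dom(b_{s,\delta})$ with values $\xi = b_{s,\delta}(\alpha)$ and $\eta = b_{s,\delta}(\beta)$, chaining the defining equalities inside $M'[H^*]$ yields $(p,q) \Vdash j(\dot\sigma^\tau_\alpha(\xi)) = j(\dot\sigma^\tau_\beta(\eta)) \cap j(\alpha)$, which by elementarity of $j$ pulls back to $(p,q) \Vdash^{V[H]}_{\mathbb{L}_\tau} \dot\sigma^\tau_\alpha(\xi) = \dot\sigma^\tau_\beta(\eta) \cap \alpha$; that is, $(\alpha,\xi) R_s (\beta,\eta)$. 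Downward closure in $R_s$ follows similarly.

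For coverage, fix $\alpha \in S$ and use fineness of $j(U_0)$ to pick $x \in j(A)$ containing both $\gamma$ and $j(\alpha')$ for some $\alpha' \in S \setminus \alpha$. Applying $j$ to Lemma~\ref{forcelemma1}, inside $M'[H^*]$ the condition $(p_x,q_x)$ forces both
\[
j(\dot d^{\mu_x})_{\sup x} \cap \gamma = j(\dot\sigma^{\mu_x})_\gamma(\delta) \qquad\text{and}\qquad j(\dot d^{\mu_x})_{\sup x} \cap j(\alpha') = j(\dot\sigma^{\mu_x}_{\alpha'}(\xi))
\]
for some $\delta, \xi < \kappa_n$. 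Combining gives $(p_x,q_x) \Vdash j(\dot\sigma^{\mu_x}_{\alpha'}(\xi)) = j(\dot\sigma^{\mu_x})_\gamma(\delta) \cap j(\alpha')$, which by definition means $(\alpha',\xi) \in b_{s,\delta}$ for $s = (\mu_x,p_x,q_x) \in I$. Then $S' := S \cap \bigcup_{s,\delta} \dom(b_{s,\delta})$ is unbounded in $\mu$, and the restrictions of the $b_{s,\delta}$ to $S' \times \kappa_n$ form the desired system of branches. The most delicate step is the master-condition argument underpinning the lift of $j$; the name-tracking between $V[H]$ and $M'[H^*]$ is routine but needs to be carried out with care.
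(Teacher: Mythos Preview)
Your argument has a genuine gap at its very first step: the claim that the lifted embedding $j:V[H]\to M'[H^*]$ can be taken to lie in $V[H]$ is false, and everything downstream depends on it. To lift $j=j_{U_{n+1}}$ through $H$ you must find an $M'$-generic for $j\bigl(\prod_m \Col(\kappa_m,{<}\kappa_{m+1})\bigr)$. The factor at $m=n$ is $\Col(\kappa_n,{<}j(\kappa_{n+1}))$, and its quotient by $H_n$ contains $\Col(\kappa_n,[\kappa_{n+1},j(\kappa_{n+1})))$; any generic for this adds a surjection of $\kappa_n$ onto $\kappa_{n+1}$. Since $M'\subseteq V[H]$ and $\kappa_{n+1}$ is a cardinal in $V[H]$, no such generic can live in $V[H]$. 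Indestructibility of $\kappa_{n+1}$ does not help here: it applies only to $\kappa_{n+1}$-directed closed forcing, and the offending factor is only $\kappa_n$-closed. So the branches $b_{s,\delta}$ you define exist a priori only in a further extension $V[H][H^*]$, and you have given no argument that any of them (let alone a covering system) lies in $V[H]$.

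This is exactly the difficulty the paper's proof is built to overcome. The paper lifts $j=j_{U_{n+2}}$ (not $j_{U_{n+1}}$) in an outer model $V[H][H^*]$ with $H^*$ generic for $\Col(\kappa_{n+1},j(\kappa_{n+3}))^V$; this poset is $\kappa_{n+1}$-closed in $V[\prod_{m\geq n+1}H_m]$ and $V[H]$ is a $\kappa_{n+1}$-c.c.\ extension of that model, so the hypotheses of Neeman's branch preservation lemma are met. One then argues that at least one $b_{s,\delta}$ is cofinal and belongs to $V[H]$; by ${<}\kappa_{n+1}$-distributivity of the extra forcing over $V[H]$, the set $\D=\{(s,\delta)\mid b_{s,\delta}\in V[H]\}$ and the sequence $\langle b_{s,\delta}\mid (s,\delta)\in\D\rangle$ are in $V[H]$, and $S'=\bigcup_{(s,\delta)\in\D}\dom(b_{s,\delta})$ is the desired unbounded set. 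Note also that the choice of $U_{n+2}$ rather than $U_{n+1}$ is not cosmetic: with $U_{n+1}$ the quotient forcing is only $\kappa_n$-closed, which is below the threshold $\kappa_n^+$ needed for branch preservation given that $|I|,\rho\leq\kappa_n$. Your definitions of the $b_{s,\delta}$ and the verification that they are branches are fine; what is missing is the entire descent argument from $V[H][H^*]$ back to $V[H]$.
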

\begin{proof}
Let $j=j_{U_{n+2}}:V\rightarrow M$.  Working in a forcing extension $V[H][H^*]$ of $V[H]$, where $H^*$ is generic for $\Col(\kappa_{n+1},j(\kappa_{n+3}))^V$, we may extend the embedding $j$ and regard it as a map $j:V[H]\rightarrow M^*$.  This poset is $\kappa_{n+1}$-closed in $V[\prod_{m \geq n+1} H_m]$, and $V[H]$ is a $\kappa_{n+1}$-c.c. extension of this model; in particular, $V[H]$ satisfies hypothesis (2) of the branch absorption Lemma 3.3 of \cite{Neeman}. Also, note that the poset to add the embedding is $<\kappa_{n+1}$ distributive in $V[H]$.

Let $\gamma\in j(S)\setminus\sup j"\mu$.  Since $\kappa_n<\crit(j)$, by elementarity applied to Lemma~\ref{forcelemma1}, we have for all $\alpha\in S$ that if we let $x \in j(A)$ so that $j(\alpha),\gamma \in x$, then there exist $\xi, \delta<\kappa_n$ and $s=(\mu_x, p_x,q_x)\in j(I)=I$ such that
 \[
 (p_x,q_x)\Vdash j(\flev{\mu_x}{\alpha}{\xi})=j\flev{\mu_x}{j(\alpha)}{\xi}=j\flev{\mu_x}{\gamma}{\delta}\cap j(\alpha).
 \]

For each $\delta<\kappa_n$ and $s=(\tau, p,q)\in I$, let
\[
 b_{s,\delta}=\{(\alpha,\xi)\mid \alpha\in S,\xi<\kappa_n, (p,q)\Vdash_{\mathbb{L}_\tau} j(\flev{\tau}{\alpha}{\xi})=j\flev{\tau}{\gamma}{\delta}\cap j(\alpha)\}.
 \]
We have that $\la b_{s, \delta}\mid s\in I, \delta<\kappa_n\ra$ is a system of branches through $\la R_s \ra_{s \in I}$: Each is clearly linearly ordered and downward closed; and we have just shown that any $x \in j(A)$ with $\gamma,j(\alpha) \in x$ witnesses $\alpha \in \dom{b_{s,\delta}}$ for some $\delta < \kappa_n$, so that $\bigcup \dom{b_{s,\delta}} = S$.  This system may not belong to $V[H]$, but we now satisfy precisely hypothesis (1) of the branch preservation lemma, Lemma 3.3, of \cite{Neeman}.  So there is some $(s,\delta) \in I \times \kappa_n$ so that $b_{s,\delta}$ is cofinal and belongs to $V[H]$.

Let $\D = \{(s, \delta) \mid b_{s,\delta} \in V[H]\}$.  By ${<}\kappa_{n+1}$-distributivity, $\D \in V[H]$, and we have just shown that $\D$ contains at least one pair $(s,\delta)$ corresponding to a cofinal branch. Again, by distributivity, $\langle b_{(s,\delta)}\mid (s,\delta)\in\D\rangle$ is in $V[H]$.  So the set $S' := \bigcup_{(s,\delta) \in \D} \dom(b_{s,\delta})$ is unbounded in $\mu$, and we have that $\la b_{s,\delta} \ra_{(s,\delta)\in\D}$ is a system of branches through $\la R_s \rst S' \times \kappa_n \ra_{s \in I}$.

Also, by passing to a subset of $I\times \kappa_n$ if necessary (any such will be in $V[H]$ by distributivity), we may assume that for all $s\in I$ and $\eta<\delta<\kappa_n$, if $b_{s,\eta}$ and $b_{s,\delta}$ are both cofinal, then they are distinct. This is done by simply removing duplicates (which may exit if the splitting between $j\dot{\sigma}^\tau_\gamma(\eta)$ and $j\dot{\sigma}^\tau_\gamma(\delta)$  is forced to be above $\sup j"\mu$).

\end{proof}

Note that if $s' = (\tau,p',q')$ and $s = (\tau,p,q)$ are such that $(p',q')\leq (p,q)$, then $R_{s'} \supseteq R_s$ and $b_{s',\delta} \supseteq b_{s,\delta}$ for all $\delta$; similarly, if $b_{s,\delta}$ is cofinal in $R_s$, then $(s,\delta) \in \D$ implies that $(s',\delta) \in \D$.

For each $s=(\tau,p,q)$ and $\delta$ such that $(s,\delta) \in \D$, let us fix an $\mathbb{L}_\tau$-name $\dot{\pi}_{s,\delta} = \bigcup\{ \flev{\tau}{\alpha}{\xi} \mid (\alpha,\xi) \in b_{s,\delta}\}$.  Note if $b_{s,\delta}$ is a cofinal branch, then $\dot{\pi}_{s,\delta}$ is forced by $(p,q)$ to name a cofinal branch through $\dot{d}^\tau$.

Our next goal is to show that these ground model branches are enough for us to repeat the final argument of Theorem~\ref{baby}.  What we need is a strengthened version of \ref{forcelemma1} for those branches from $\D$.

First, we bound the splitting for all branches (not just those in $V[H]$).  Working in $V[H][H^*]$ from the proof of Lemma~\ref{forcelemma2}, let, for each $\eta<\delta<\kappa_n$ and $s \in I$, let $\alpha_{s,\eta,\delta}$ be the least $\alpha$ so that $b_{s,\eta}(\alpha)$ and $b_{s,\delta}(\alpha)$ are (both defined and) not equal, if such exists; otherwise, let $\alpha_{s,\eta,\delta} = \sup \dom(b_{s,\eta}) \cup \dom(b_{s,\delta})$.  Let $\bar{\alpha} = \sup_{s \in I, \eta<\delta<\kappa_n} \alpha_{s,\eta,\delta} + 1$.

For $x \in A'$ let $(p_x,q_x) \in \mathbb{L}_{\mu_x}$ be as in the conclusion of Lemma~\ref{forcelemma1}; and set $s_x = (\mu_x,p_x,q_x)$.

\begin{lemma}\label{forcelemma3}
  There exist an unbounded $\bar{S} \subseteq S'$ and $\bar{A} \in U_0$ with $\bar{A} \subseteq A$, so that for all $x \in \bar{A}$, for all $\alpha \in \bar{S} \cap x$ we have
  \begin{equation*}\label{dagger} \tag{$\dagger_{x,\alpha}$}
  \text{for some } \delta < \kappa_n, \;(s_x,\delta) \in \D\text{ and }
    (p_x,q_x) \Vdash_{\mathbb{L}_{\mu_x}} \dot{d}^{\mu_x}_{\sup x} \cap \alpha = \dot{\pi}_{s_x,\delta} \cap \alpha.
   \end{equation*}
\end{lemma}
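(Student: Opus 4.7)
The plan is to strengthen Lemma~\ref{forcelemma1} by replacing its bare witness $\xi<\kappa_n$ with an index $\delta$ of a ground-model branch $b_{s_x,\delta}\in V[H]$ (that is, $(s_x,\delta)\in\D$).  I will follow a template analogous to the end of Theorem~\ref{baby}, with extra care to track $\D$-membership.

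First, for each $x\in A$, Lemma~\ref{forcelemma1} produces a unique $\xi_{x,\alpha}<\kappa_n$ with $(p_x,q_x)\Vdash\dot{d}^{\mu_x}_{\sup x}\cap\alpha=\flev{\mu_x}{\alpha}{\xi_{x,\alpha}}$ for each $\alpha\in x\cap S$ (uniqueness by the no-repetitions assumption).  Coherence of $\dot{d}^{\mu_x}$ forced by $(p_x,q_x)$ then implies that $E_x=\{(\alpha,\xi_{x,\alpha})\mid\alpha\in x\cap S\}$ is $R_{s_x}$-linearly ordered; its downward $R_{s_x}$-closure $\hat E_x$ is a branch through $R_{s_x}$ lying in $V[H]$.

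Next, I will re-apply $j=j_{U_{n+2}}:V[H]\to M^*$ with $\gamma\in j(S)\setminus\sup j"\mu$ as in the proof of Lemma~\ref{forcelemma2}.  After shrinking $A$ so that $x\mapsto(p_x,q_x)$ is $U_0$-almost constant (in analogy with the choice of $(p,q)$ in the proof of Lemma~\ref{forcelemma1}), for each $x\in A$ and $\alpha\in x\cap S$ past $\bar\alpha$ I find an enlargement $x'\in j(A)$ of $j"x\cup\{\gamma\}$ with $s_{x'}=s_x$, using that $\mu_x,p_x,q_x$ lie below $\crit(j)=\kappa_{n+2}$.  Applying Lemma~\ref{forcelemma1} in $M^*$ to $x'$ at level $j(\alpha)$ yields $\delta<\kappa_n$ with $(p_x,q_x)\Vdash j(\flev{\mu_x}{\alpha}{\xi_{x,\alpha}})=j\flev{\mu_x}{\gamma}{\delta}\cap j(\alpha)$, i.e., $(\alpha,\xi_{x,\alpha})\in b_{s_x,\delta}$; past $\bar\alpha$ this $\delta=\delta_{x,\alpha}$ is uniquely determined by the splitting bound.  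I then claim $(s_x,\delta_{x,\alpha})\in\D$: past $\bar\alpha$, $b_{s_x,\delta_{x,\alpha}}$ is determined within the enumerated system by any single point on it, and a branch-absorption-style argument will show it agrees with---and is reconstructible in $V[H]$ from---$\hat E_x$ together with the $R_{s_x}$-closure, so that $b_{s_x,\delta_{x,\alpha}}\in V[H]$.

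Finally, taking $\bar A=\{x\in A\mid \bar\alpha\in x\}\in U_0$ and $\bar S=S'\setminus\bar\alpha$ (unbounded in $\mu$ by Lemma~\ref{forcelemma2}) verifies the statement: for $x\in\bar A$ and $\alpha\in\bar S\cap x$ we have $(p_x,q_x)\Vdash\dot\pi_{s_x,\delta_{x,\alpha}}\cap\alpha=\flev{\mu_x}{\alpha}{\xi_{x,\alpha}}=\dot{d}^{\mu_x}_{\sup x}\cap\alpha$, giving $(\dagger_{x,\alpha})$.  The main obstacle will be the $\D$-membership claim: the splitting bound controls branches within the enumerated system $\{b_{s,\delta}\}$, but a priori $R_{s_x}$ may admit other $V[H]$-branches extending $\hat E_x$; addressing this requires a branch-absorption argument in the spirit of Lemma 3.3 of \cite{Neeman} to pin down $b_{s_x,\delta_{x,\alpha}}$ as coinciding with its $V[H]$-reconstruction rather than some impostor branch through $R_{s_x}$.
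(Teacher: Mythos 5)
Your proposal attempts a direct argument, whereas the paper proves Lemma~\ref{forcelemma3} by contradiction; this difference is not merely stylistic---the direct route as outlined does not close.

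The crux of the lemma is to find $\bar{S}$ and $\bar{A}$ so that the $\delta$'s arising from the elementarity argument always index branches \emph{already in $V[H]$}, i.e., in $\D$. Your proposal takes the specific candidates $\bar{S}=S'\setminus\bar\alpha$ and $\bar A=\{x\in A\mid\bar\alpha\in x\}$, and then would need to show that for \emph{every} $x\in\bar A$ and $\alpha\in\bar S\cap x$, the branch $b_{s_x,\delta_{x,\alpha}}$ identified via $j$ lies in $V[H]$. That claim is not justified, and you flag as much at the end. But the obstruction is not really about distinguishing $b_{s_x,\delta_{x,\alpha}}$ from ``impostor'' $V[H]$-branches through $R_{s_x}$; the more basic problem is that $\hat E_x$ is a \emph{bounded} branch (its domain sits below $\sup x<\mu$), while the branches $b_{s,\delta}$ were built in $V[H][H^*]$ from the embedding $j$ and may well be cofinal and genuinely new over $V[H]$. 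A bounded initial segment does not determine whether the full branch through $R_{s_x}$ that extends it lies in $V[H]$; Lemma 3.3-type absorption gives you that \emph{at least one} branch of a suitably large system is absorbed, not that any particular branch pinned down by a bounded piece is. Incidentally, the claim that $x\mapsto(p_x,q_x)$ can be arranged to be $U_0$-almost constant is also off: $\mathbb{L}_{\mu_x}$ varies with $x$, so only the equivalence classes $[x\mapsto p_x]=p$ and $[x\mapsto q_x]=q$ are fixed.

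The paper's proof sidesteps all of this by not attempting to show that every encountered $\delta$ lands in $\D$. Instead it sets $A_\alpha=\{x\in A\mid(\dagger_{x,\alpha})\text{ holds}\}$ and aims only to show $\bar S=\{\alpha\mid A_\alpha\in U_0\}$ is unbounded. Supposing otherwise, one deletes the $\D$-branches from each $R_s$, checks that what remains is still a system on a tail of $S'\times\kappa_n$ with a system of branches (the non-$\D$ ones), and then invokes branch preservation to produce a cofinal branch of the pruned system in $V[H]$. Such a branch is recoverable from any of its cofinal subsets, hence must equal some $b_{s,\delta}$ with $(s,\delta)\in\D$, contradicting that those were deleted. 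Your proposal is missing this key contradiction-and-deletion step, and without it the $\D$-membership gap cannot be filled.
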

First let us see how to finish the proof of Theorem~\ref{forcebaby} assuming the lemma.  By our choice of $\bar{\alpha}$, any names $\dot{\pi}_{s,\delta}$ and $\dot{\pi}_{s,\eta}$ corresponding to cofinal branches of $V[H]$ are, by elementarity and the definition of these names, forced outright to disagree below $\bar{\alpha}$.

Suppose we have $x \in \bar{A}$ with $x\cap\bar{S}$ unbounded in $\sup x$, $\bar{\alpha} < \sup x$, and let $G_x$ be generic for $\mathbb{L}_{\mu_x}$ with $(p_x,q_x) \in G_x$.  By our definition of $\bar{\alpha}$, there exists a strengthening $(p'_x,q'_x) \in G_x$ of $(p'_x,q'_x)$ that forces $\dot{\pi}_{s,\delta} \cap \alpha \neq \dot{\pi}_{s,\eta} \cap \alpha$ for all $\alpha > \bar{\alpha}$ and distinct $\eta,\delta$ such that $(s,\eta),(s,\delta) \in \D$ represent cofinal branches; and $\alpha$ is above the domains of all bounded $b_{s,\delta}$'s.

Now for any $\alpha \in x \cap \bar{S}$, $\alpha>\bar{\alpha}$, we have some $\delta < \kappa_n$ so that $(p_x,q_x) \Vdash \dot{d}^{\mu_x}_{\sup x} \cap \alpha = \dot{\pi}_{s,\delta } \cap \alpha = \dot{\pi}_{s',\delta} \cap \alpha$.  Since we are above the splitting, these must be the same branch, and so without loss of generality this $\delta$ must be the same for each $\alpha \in x \cap \bar{S}$.  It follows that $(p_x,q_x) \Vdash \dot{d}^{\mu_x}_{\sup x} = \dot{\pi}_{s,\delta} \cap \sup x$.

Letting, for $(s,\delta) \in \D$ with $s = (\tau,p,q)$,
 \[
  T_{s,\delta} = \{\gamma \in \mu \mid (p,q) \Vdash_{\mathbb{L}_\tau} \dot{d}^{\mu_x}_\gamma = \dot{\pi}_{s,\delta} \cap \gamma \},
 \]
what we have shown is that $T := \bigcup_{(s,\delta) \in \D} T_{s,\delta} \supseteq \{ \sup x \mid x \in \bar{A}, \bar{\alpha}<\sup x, x\cap\bar{S}\text{ unbounded in }\sup x \}$.  So $T$ is stationary; since $|\D| \leq \kappa_n < \mu$, there is some fixed $(s,\delta)$ so that $T_{s,\delta}$ is stationary.  But since $\mathbb{L}_{\tau}$ preserves stationarity of subsets of $\mu$, we have that $b_{s,\delta}$ defines an ineffable branch through ${\dot{{d^\tau}}}$ in any extension containing $(p,q)$, a contradiction.

\begin{proof}[Proof of Lemma~\ref{forcelemma3}]
It is sufficient to show that if we set $A_\alpha = \{x \in A \mid \text{(\ref{dagger}) holds}\}$, then $\bar{S} = \{\alpha<\mu \mid A_\alpha \in U_0\}$ is unbounded in $\mu$; since in that case, $\bar{A} = A \cap \triangle_{\alpha \in \bar{S}} A_\alpha$ is as desired.

So suppose $\bar{S}$ is bounded.  Fix $\alpha_0 < \mu$ so that $\bar{\alpha} < \alpha_0$, and $A_\alpha \notin U_0$ whenever $\alpha > \alpha_0$ is in $S'$.  Put $A' := A \cap \triangle_{\alpha_0<\alpha \in S'} \mathcal{P}_{\kappa_0}(\mu) \setminus A_\alpha$; so $A' \in U_0$, and (\ref{dagger}) fails whenever $\alpha \in x \in A'$ with $\alpha \in S'$.

%

We wish to show that if $R'_{s}$ is obtained by deleting all ground model branches $b_{s,\delta}$ from $R_s$, then the resulting family $\la R'_{s} \ra_{s \in I}$ is a system on $(S' \setminus \alpha_0) \times \kappa_n$.  That is, for each $s$, let
  \begin{align*}
  (\alpha,\xi) \mathbin{R'_s} (\beta,\zeta) \iff &\alpha_0 < \alpha,\; \beta \in S', \;(\alpha,\xi) \mathbin{R_s} (\beta,\zeta), \text{ and for all }\delta < \kappa_n,\\
  &\text{ if }(s,\delta) \in \D \text{ then }(\alpha,\xi) \notin b_{s,\delta}.
  \end{align*}
The first two properties of a system are clear.  For (3), suppose $\alpha_0 < \alpha < \beta$ with $\alpha,\beta$ both in $S'$.  By elementarity, we have some $x' \in j(A')$ so that $j(\alpha),j(\beta),\gamma \in x'$, where here $\gamma$ is the element of $j(S)\setminus \sup j"\mu$ we used to define the system of branches in Lemma~\ref{forcelemma2}.  Now by Lemma \ref{forcelemma1}, and since $j(\alpha),j(\beta)\in j(S)$ as well, we have some $\xi,\zeta,\delta < \kappa_n$ so that
 \[
 (p_{x'},q_{x'}) \Vdash j\flev{\mu_{x'}}{j(\alpha)}{\xi} = j\flev{\mu_x}{\gamma}{\delta} \cap j(\alpha), j\flev{\mu_x}{j(\beta)}{\zeta} = j\flev{\mu_x}{\gamma}{\delta} \cap j(\beta),
 \]
and moreover, each of these nodes is forced by $(p_{x'},q_{x'})$ to cohere with $j\dot{d}^{\mu_x}_{\sup x}$.

Now $x' \notin j" V[H]$, but by elementarity we have some $x \in A'$ so that $s_x = s_{x'}$ with $\alpha,\beta \in x$.  In particular, we have $(\alpha,\xi) R_{s_x,\delta} (\beta,\zeta)$; indeed, $(\alpha,\xi),(\beta,\eta)$ are both in $b_{s_x,\delta}$.  Since we are above splitting, we have for any $\delta'$ with $(\alpha,\xi) \in b_{s_x,\delta'}$ that this branch coincides with $b_{s_x,\delta}$.  So to obtain condition (3), we just need to show $(s_x,\delta) \notin \D$.  

Suppose $(s_x,\delta) \in \D$.  Since $\alpha \in x\cap S'$ with $x \in A'$, (\ref{dagger}) fails.  Then we have
  \[
   (p_x,q_x) \not\Vdash \dot{d}^{\mu_x}_{\sup x} \cap \alpha = \dot{\pi}_{s_x,\delta} \cap \alpha.
  \]
But by our choice of $x$,
  \[
   (p_x,q_x) \Vdash \dot{d}^{\mu_x}_{\sup x} \cap \alpha = \flev{\mu_x}{\alpha}{\xi}.
  \]
But these conditions and our definition of $\dot{\pi}_{s_x,\delta}$ contradict $(\alpha,\xi) \in b_{s_x,\delta}$.

%

Now that $\la R'_s \ra_{s \in I}$ is a system, we let for each $(s,\delta) \notin \D$ the branch $b'_{s,\delta}$ be the restriction of $b_{s,\delta}$ to $R'_s$.  Then $\langle b'_{s,\delta}\mid (s,\delta)\notin \D\rangle$ is a system of branches through the system $\langle R_s'\mid s\in I\rangle$.

Now recapitulating the argument of Lemma~\ref{forcelemma2}, we see that there exists some $s$ and a $\delta$ so that $b'_{s,\delta} \rst R'_s$ is cofinal and belongs to $V[H]$.  Since $b_{s,\delta}$ can be recovered from any cofinal subset, we must have $(s,\delta) \in \D$.  But this contradicts our definition of the system $\la R'_s \ra_{s \in I}$.  This contradiction completes the proof.
\end{proof}

Next we prove the full, two cardinal version.
We make a slight modification: For $\tau<\kappa=\kappa_0$, with countable cofinality, define $\mathbb{L}_\tau=\Col(\omega, \tau)\times \Col(\tau^{+3}, {<}\kappa)$.

\begin{theorem}\label{force two cardinal}
Let $H=\prod_{n}H_n$ be $\prod_{n<\om} \Col(\kappa_n, {<}\kappa_{n+1})$-generic over $V$.  Then there exists $\tau<\kappa$ with countable cofinality such that in the extension of $V[H]$ by $\mathbb{L}_\tau$, $\mu = \aleph_{\omega+1}$ and $\ITP_\mu$ holds.
\end{theorem}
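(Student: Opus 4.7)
The plan is to combine the proof of Theorem~\ref{two cardinal} (two-cardinal $\ITP$ in $V$) with the system-of-branches forcing argument of Theorem~\ref{forcebaby}. Fix $\lambda \geq \mu$ and suppose for contradiction that for each $\tau = \delta^+ < \kappa = \kappa_0$ with $\delta$ a strong limit of countable cofinality, there is in $V[H]$ an $\mathbb{L}_\tau$-name $\dot d^\tau$ for a thin $\mathcal{P}_\mu(\lambda)$-list forced by $\1_{\mathbb{L}_\tau}$ to have no ineffable branch; enumerate its $z$-th level as $\{\flev{\tau}{z}{\xi} \mid \xi < \nu\}$ without repetitions on a tail. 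By indestructibility, fix in $V[H]$ a normal measure $U_0$ on $\mathcal{P}_{\kappa_0}(\lambda)$, and for each $n > 0$ a normal measure $U_n$ on $\mathcal{P}_{\kappa_n}(\lambda)$ in $V$. Let $i = j_{U_0}$, $z^* = \bigcup i"\mathcal{P}_\mu(\lambda)$, and fix $g, c, h$ as in Theorem~\ref{two cardinal}: $[g]_{U_0} = z^*$, $c:\mathcal{P}_\mu(\lambda) \to \lambda$ is an injection, and $h(x) = \{z \mid c(z) \in x\}$ represents $i"\mathcal{P}_\mu(\lambda)$; note $[x \mapsto \mathbb{L}_{\mu_x}]_{U_0} = \Col(\omega,\nu) \times \Col(\mu^{+3},{<}i(\kappa))$, whose upper factor affords the closure needed for the gluing below.

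First I would establish the two-cardinal forcing analogue of Lemma~\ref{forcelemma1}: combining the inductive choice of $q_z$'s and finite-$p_z$ pigeonhole from Lemma~\ref{forcelemma1} with the $h$-diagonal intersection from Theorem~\ref{two cardinal}, I obtain $n < \omega$, a stationary $S \subseteq \mathcal{P}_\mu(\lambda)$, $A \in U_0$, and $x \mapsto (p_x, q_x) \in \mathbb{L}_{\mu_x}$ such that whenever $z \in S$, $x \in A$, and $c(z) \in x$, there is some $\xi < \kappa_n$ with
\[
(p_x, q_x) \Vdash_{\mathbb{L}_{\mu_x}} \dot d^{\mu_x}_{g(x) \cap \lambda} \cap z = \flev{\mu_x}{z}{\xi}.
\]
Then, writing $I = \{(\tau, p, q) \mid \tau = \delta^+ < \kappa$, $\delta$ singular strong limit of cofinality $\omega$, $(p, q) \in \mathbb{L}_\tau\}$, define $R_s$ on $S \times \kappa_n$ for $s = (\tau, p, q) \in I$ by $(z, \xi) R_s (w, \eta) \iff z \subseteq w \wedge (p, q) \Vdash_{\mathbb{L}_\tau} \flev{\tau}{z}{\xi} = \flev{\tau}{w}{\eta} \cap z$. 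The preceding display makes $(R_s)_{s \in I}$ a system on $S \times \kappa_n$, with comparability (3) witnessed by any $x \in A$ containing both $c(z)$ and $c(w)$.

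Next, let $j = j_{U_{n+2}}: V[H] \to M^*$ extend the ultrapower to $V[H]$ in the generic extension $V[H][H^*]$ by $\Col(\kappa_{n+1}, j(\kappa_{n+3}))^V$, just as in Theorem~\ref{forcebaby}. Pick $u \in j(S)$ with $\bigcup j"\mathcal{P}_\mu(\lambda) \subseteq u$, and define, for $s = (\tau, p, q) \in I$ and $\delta < \kappa_n$,
\[
b_{s,\delta} = \{(z, \xi) \mid z \in S, \xi < \kappa_n, (p, q) \Vdash_{\mathbb{L}_\tau} j(\flev{\tau}{z}{\xi}) = j\flev{\tau}{u}{\delta} \cap j(z)\}.
\]
Verbatim from Lemma~\ref{forcelemma2}, these form a system of branches through $(R_s)_{s \in I}$ in $V[H][H^*]$; applying Neeman's branch absorption lemma and ${<}\kappa_{n+1}$-distributivity of the auxiliary poset yields $\D \in V[H]$ consisting of pairs $(s, \delta)$ with $b_{s,\delta} \in V[H]$, containing at least one cofinal branch.

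Finally, I would bound the splitting between distinct cofinal $b_{s,\delta}, b_{s,\eta}$ by a single $\bar z \in \mathcal{P}_\mu(\lambda)$ (in $V[H][H^*]$, hence in $V[H]$ by distributivity) and prove the two-cardinal analogue of Lemma~\ref{forcelemma3}: there are a stationary $\bar S \subseteq S$ and $\bar A \in U_0$ so that for every $x \in \bar A$ and every $z \in \bar S$ with $c(z) \in x$, some $(s_x, \delta) \in \D$ witnesses $(p_x, q_x) \Vdash \dot d^{\mu_x}_{g(x) \cap \lambda} \cap z = \dot\pi_{s_x, \delta} \cap z$, where $\dot\pi_{s, \delta}$ is the $\mathbb{L}_\tau$-name $\bigcup\{\flev{\tau}{z}{\xi} \mid (z, \xi) \in b_{s, \delta}\}$. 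Combined with the splitting bound, this forces for some fixed $(s, \delta) \in \D$ that $T_{s, \delta} = \{z \in \mathcal{P}_\mu(\lambda) \mid (p, q) \Vdash \dot d^\tau_z = \dot\pi_{s, \delta} \cap z\}$ is stationary in $V[H]$ (via the argument of Claim~\ref{g-image is stationary}); since $\mathbb{L}_\tau$ preserves stationarity of subsets of $\mathcal{P}_\mu(\lambda)$, this gives an ineffable branch for $\dot d^\tau$ in $V[H][G_\tau]$, contradicting its choice. The main obstacle, as in Theorem~\ref{forcebaby}, is the two-cardinal analogue of Lemma~\ref{forcelemma3}: its proof by contradiction requires showing that deleting the ground-model branches from the $R_s$ leaves a system $(R'_s)_{s \in I}$ on $(S \setminus \bar z) \times \kappa_n$, which hinges on a delicate elementarity argument producing, for any $z \subseteq w$ in the restricted set, some $x' \in j(A')$ containing $j(z), j(w), u$ simultaneously, then translating back to an $x \in A'$ with $s_x = s_{x'}$ and $c(z), c(w) \in x$.
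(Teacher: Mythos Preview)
Your outline has a genuine gap in the analogue of Lemma~\ref{forcelemma1}. You claim that the upper factor $\Col(\mu^{+3},{<}i(\kappa))$ ``affords the closure needed for the gluing'', meaning you intend to choose the conditions $q^z$ inductively decreasing over $z\in\mathcal{P}_\mu(\lambda)$ and then take a common lower bound $q$. But that upper collapse is only ${<}\mu^{++}$-closed (or ${<}\mu^{+3}$-closed, depending on which $\mathbb{L}_\tau$ you use), whereas the index set has size $|\mathcal{P}_\mu(\lambda)|=\lambda^{<\mu}$, which for arbitrary $\lambda\geq\mu$ can be far larger. So you cannot in general obtain the single master condition $q$ needed to reflect via {\L}o\'s to the uniform $(p_x,q_x)$; the rest of your argument (the system, the branches, the splitting bound) never gets off the ground.

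The paper resolves this by an extra collapse: before running the system argument, it forces with $\Col^{V[H]}(\mu,{<}\lambda)$ to obtain $K$, so that in $V[H][K]$ one has $\lambda=\mu^+$ and hence $|\mathcal{P}_\mu(\lambda)|=\lambda<\mu^{++}$, matching the closure of the upper factor of $\mathbb{L}_\nu$. The entire system-of-branches argument is then carried out in $V[H][K]$ (this is the content of Theorem~\ref{muplus}), producing an ineffable branch for $d^\tau$ in $V[H][K][\mathbb{L}_\tau]$. One then pulls the branch back to $V[H][\mathbb{L}_\tau]$ using that $\Col(\mu,{<}\lambda)$ has the $\mu$-thin approximation property over $V[H][\mathbb{L}_\tau]$ (it is $\mu$-closed in $V[H]$ and $\mathbb{L}_\tau$ is $\kappa$-c.c.), and stationarity of the witnessing set is downward absolute. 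A smaller point: your contradiction hypothesis fixes $\lambda$ before quantifying over $\tau$, but a priori each $\tau$ has its own bad $\lambda_\tau$; the paper takes a supremum (arranging $\lambda^\mu=\lambda$) to get a single $\lambda$ that works for all $\tau$.
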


\begin{proof}
Suppose otherwise. Then for every $\tau<\kappa$, there is some $\lambda$, such that $\ITP(\mu,\lambda)$ fails in the extension of $V[H]$ by $\mathbb{L}_\tau$. By taking a supremum, assume the $\lambda$ is the same for all $\tau$ and that $\lambda^{\mu}=\lambda$.

So for each $\tau$, let $\dot{d}^\tau$ be a name for a $\mathbb{P}_\mu(\lambda)$ thin list which is forced by $1_{\mathbb{L}_\tau}$ not to have an ineffable branch. Let $K$ be $\Col^{V[H]}(\mu, {<}\lambda)$-generic over $V[H]$.

\begin{theorem}\label{muplus}
There is $\tau<\kappa$, such that $V[H][K][\mathbb{L}_\tau]$, $d^\tau$ has an ineffable branch.
\end{theorem}

Assuming this, let us show that this is enough to prove Theorem \ref{force two cardinal}. Let $b$ be an ineffable branch for $d^\tau$ in $V[H][K][\mathbb{L}_\tau]$. Since $\Col(\mu, {<}\lambda)$ is $\mu$ closed in $V[H]$, and $\mathbb{L}_\tau$ is $\kappa$-c.c. (actually we only need $\mu$-c.c.), we have that in $V[H]$, $1_{\mathbb{L}_\tau}$ forces that $\Col(\mu, {<}\lambda)$ has the $\mu$-thin approximation property. Therefore, $b\in V[H][\mathbb{L}_\tau]$. Since stationarity is downwards absolute, $b$ is an ineffable branch for  $d^\tau$ in $V[H][\mathbb{L}_\tau]$. But that is a contradiction with the choice of $\dot{d}^\tau$, and so the result follows.
\end{proof}

Now for the proof of Theorem \ref{muplus}, first note that in $V[H][K]$, $\kappa$ is still supercompact, each $\kappa_n$, $n>0$, is generically supercompact for the right type of quotient, and $\lambda$ is $\mu^+$. So, this will be a  similar argument as in Theorem \ref{forcebaby}. Except that here we work in $V[H][K]$ instead of $V[H]$, and consider $\mathcal{P}_{\mu}(\lambda)=\mathcal{P}_\mu(\mu^+)$ lists and $\lambda=\mu^+$-supercompact embedding.

We outline the proof, skipping some of the details.

\begin{proof}[Proof of Theorem \ref{muplus}]
Let $U_0$ be a normal measure on $\mathcal{P}_\kappa(\lambda)$ in $V[H][K]$, and
set $i=j_{U_0}:V[H][K]\rightarrow M$ to be the corresponding $\lambda$ supercompact embedding with critical point $\kappa$. And for each $n>0$, let $U_n$ be a normal measure on $\mathcal{P}_{\kappa_n}(\lambda)$ in $V$.

For each $x\in\mathcal{P}_\mu(\lambda)$, set $\tau_x=\kappa_x^{+\omega}$. Then $[x\mapsto \mathbb{L}_{\tau_x}]_{U_0}=\Col(\omega, \nu)\times \Col(\mu^{++}, {<}i(\kappa))=\mathbb{L}_\nu$ (of course here $\mu^{++}=\kappa^{+\omega+3}=\lambda^+$ both in $V[H][K]$ and in the ultrapower.

Note that $\mathcal{P}^{V[H]}_\mu(\lambda)=\mathcal{P}^{V[H][K]}_\mu(\lambda)$, so we just denote it by $\mathcal{P}_\mu(\lambda)$.
As before for each $z\in \mathcal{P}_\mu(\lambda)$, assume that that $1_{\mathbb{L}_{\tau_x}}$ forces that the $z$-th level of $\dot{d}^{\tau_x}$ is enumerated by the names $\{\flev{\tau_x}{z}{\xi}\mid \xi<\nu\}$.

As in Theorem \ref{two cardinal}, fix a bijection $c:\mathcal{P}_\mu(\lambda)\rightarrow\lambda$, and set $z^*:=\bigcup i"\mathcal{P}_\mu(\lambda)=[g]_{U_0}$. Then we have:
\begin{itemize}
\item
If $A\in U_0$, then $\bar{A}:=\{g(x)\mid x\in A\}$ is stationary in $\mathcal{P}_\mu(\lambda)$.
\item
$i\dot{d}^\nu_{z^*}=[x\mapsto \dot{d}^{\tau_x}_{g(x)}]_{U_0}$.
\item
We may assume that for all $x$, $g(x)=\bigcup\{z\mid c(z)\in x\}$.

\end{itemize}

\begin{lemma}\label{lemma1muplus}
There exist $n<\omega$, a cofinal $S\subset\mathcal{P}_\mu(\lambda)$, and $A\in U_0$ such that for all $x\in A$ and $z\in S$ with $c(z)\in x$, there is $\xi<\kappa_n$ and $(p_x,q_x)\in \mathbb{L}_{\tau_x}$ such that $(p_x,q_x)\Vdash \dot{d}^{\tau_x}_{g(x)}\cap z=\flev{\tau_x}{z}{\xi}$.
\end{lemma}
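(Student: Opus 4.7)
The plan is to follow the pattern of Lemma~\ref{forcelemma1}, adapting it to the two-cardinal setting in the same way that Theorem~\ref{two cardinal} adapts Theorem~\ref{baby}. The reason for enlarging the second factor of $\mathbb{L}_\tau$ from $\Col(\tau^+,{<}\kappa)$ to $\Col(\tau^{+3},{<}\kappa)$ is exactly so that in $M$ the poset $\Col(\mu^{++},{<}i(\kappa))$ is $\mu^{++}$-closed, which is needed to build a decreasing sequence of conditions indexed along an enumeration of $\mathcal{P}_\mu(\lambda)$ of order type $\mu^+$.

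I first work in $M$ with the $\mathbb{L}_\nu$-name $i\dot{d}^\nu_{z^*} = [x\mapsto \dot{d}^{\tau_x}_{g(x)}]_{U_0}$, where $\mathbb{L}_\nu = \Col(\omega,\nu) \times \Col(\mu^{++},{<}i(\kappa))$. For each $z \in \mathcal{P}_\mu(\lambda)$, since $i\dot{d}^\nu_{z^*}\cap i(z)$ is forced to lie in the $i(z)$-th level of $i\dot{d}^\nu$, enumerated as $\{i\flev{\nu}{i(z)}{\xi}:\xi<i(\nu)\}$, I pick a condition $(p_z,q_z)\in \mathbb{L}_\nu$, an integer $n_z<\omega$, and an ordinal $\xi_z<i(\kappa_{n_z})=\kappa_{n_z}$ such that
\[
(p_z,q_z)\Vdash_{\mathbb{L}_\nu} i\dot{d}^\nu_{z^*}\cap i(z)=i\flev{\nu}{i(z)}{\xi_z}.
\]
Enumerating $\mathcal{P}_\mu(\lambda)$ in order type $\mu^+$, I use $\mu^{++}$-closure of the second factor to arrange, inductively along this enumeration, that $\langle q_z : z\in\mathcal{P}_\mu(\lambda)\rangle$ is decreasing; at each stage I replace $q_z$ by a common refinement of it with the lower bound of the previously chosen $q$'s.

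Since $(p_z,n_z) \in \Col(\omega,\nu)\times\omega$ has at most $\nu<\mu$ possible values, normality of the club filter on $\mathcal{P}_\mu(\lambda)$ yields a stationary $S\subseteq \mathcal{P}_\mu(\lambda)$ on which $p_z=p$ and $n_z=n$ are constant. Let $q$ be a lower bound in $M$ of the decreasing sequence $\langle q_z : z\in S\rangle$ (of length $\leq\mu^+$); then $(p,q)\leq (p_z,q_z)$, and so $(p,q)$ forces the displayed equation for every $z\in S$. Write $(p,q)=[x\mapsto(p_x,q_x)]_{U_0}$. By {\L}o\'s's theorem, for each $z\in S$ the set
\[
A_z=\{x\in\mathcal{P}_{\kappa_0}(\lambda) : \exists\xi<\kappa_n\ (p_x,q_x)\Vdash_{\mathbb{L}_{\tau_x}}\dot{d}^{\tau_x}_{g(x)}\cap z=\flev{\tau_x}{z}{\xi}\}
\]
lies in $U_0$. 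Taking the diagonal intersection $A=\triangle_{z\in S}A_z$ via the bijection $c:\mathcal{P}_\mu(\lambda)\to\lambda$ and the helper function $h(x)=\{z : c(z)\in x\}$, exactly as in the proof of Theorem~\ref{two cardinal}, yields the required $A\in U_0$.

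The principal obstacle is calibrating the closure correctly: the $z$-indexed construction has length $\mu^+$ rather than $\mu$, so the $\mu$-closure available in the setup of Lemma~\ref{forcelemma1} no longer suffices. This is exactly why $\mathbb{L}_\tau$ has been redefined with a higher collapse $\Col(\tau^{+3},{<}\kappa)$; after passing to the ultrapower the resulting poset is $\mu^{++}$-closed in $M$, which accommodates both the length-$\mu^+$ inductive construction and the subsequent common lower bound. A small additional care is to ensure the pigeonhole step produces a \emph{stationary} $S$, so that the downstream diagonal-intersection argument of Theorem~\ref{two cardinal} transfers verbatim.
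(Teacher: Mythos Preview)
Your proposal is correct and follows essentially the same route as the paper: choose conditions $(p_z,q_z)$ in $\mathbb{L}_\nu$ deciding the index of $i\dot d^\nu_{z^*}\cap i(z)$, use the $\mu^{++}$-closure of the second factor to make the $q_z$'s decreasing along an enumeration of $\mathcal{P}_\mu(\lambda)$ of length $\mu^+$, pigeonhole on $(p_z,n_z)$, take a common lower bound $q$, reflect via {\L}o\'s, and diagonalize. One slip: you write $\xi_z<i(\kappa_{n_z})=\kappa_{n_z}$, but $\crit(i)=\kappa_0\le\kappa_{n_z}$, so $i(\kappa_{n_z})>\kappa_{n_z}$; the correct statement is $\xi_z<i(\kappa_{n_z})$, and it is only after reflecting by {\L}o\'s that the bound becomes $\xi<\kappa_n$ on the measure-one set $A_z$.
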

\begin{proof}
Analogously to the proof of Lemma \ref{forcelemma1}. Here for every $z\in\mathcal{P}_\mu(\lambda)$, we find conditions $(p^z, q^z)$ forcing over $\mathbb{L}_\nu$ that $i\dot{d}^\nu_{z^*}\cap i(z)=i\flev{\nu}{i(z)}{\xi}$ for some $\xi<i(\nu)$. Since the closure of the second factor is $\lambda^+$, we can define the $q^z$'s inductively to be decreasing according to some enumeration and take $q\in \Col(\mu^{++}, {<}i(\kappa))$ to be a lower bound. And find a cofinal $S\subset\mathcal{P}_\mu(\lambda)$, $n<\omega$ and $p\in\Col(\omega, \nu)$, such that for all $z\in S$, the witnessing $\xi$ is less than $i(\kappa_n)$ and $p^z=p$.

Let $p=[x\mapsto p_x]$, $q=[x\mapsto q_x]$, and for each $z\in S$, get a measure one set $A_z$ witnessing $(p, q)\Vdash_{\mathbb{L}_\nu}i\dot{d}^\nu_{z^*}\cap i(z)=i\flev{\nu}{i(z)}{\xi}$ for some $\xi<i(\kappa_n)$.
Take $\triangle A_z$. These are as desired.
\end{proof}

Let $I=\{(\tau, p,q)\mid \tau<\kappa,(p,q)\in\mathbb{L}_\tau\}$.
For all $s=(\tau,p,q)\in I$, define the relation $R_s$ on $S\times\kappa_n$ by $(z, \xi)R_s(z', \eta)$ iff $z \subset z'$ and $(p,q)\Vdash_{\mathbb{L}_\tau}\flev{\tau}{z}{\xi}=\flev{\tau}{z'}{\eta}\cap z$.
Then,
$\la R_s \ra_{s \in I}$ is a system on $S\times \kappa_n$. Here the definition of system uses $\subset$ instead of $\leq$. More precisely:

\begin{definition}
Let $D$ be a cofinal subset of $\mathcal{P}_\mu(\lambda)$, $\rho \in \Ord$, and $I$ be an index set.  A \it{system on $D \times \rho$} is a family $\langle R_s \rangle_{s \in I}$ of $\subset$-transitive, reflexive relations on $D \times \rho$, so that
  \begin{enumerate}
    \item If $( x, \xi ) R_s ( y, \zeta)$ and $( x, \xi ) \neq ( y, \zeta)$ then $x\subsetneq y$.
    \item If $( x_0, \xi_0)$ and $( x_1, \xi_1)$ are both $R_s$-below $( y, \zeta)$ and $x_0\subset x_1$, then $( x_0,\xi_0) R_s(x_1,\xi_1)$.
    \item For every $x,y$ both in $D$, there are $z\in D$, $s \in I$ and $\xi,\xi',\zeta \in \rho$ so that $x\cup y\subset z$, $(x, \xi ) R_s (z, \zeta )$ and $(y,\xi')R_s(z,\zeta)$.
  \end{enumerate}
A \it{branch} through $R_s$ is a partial function $b:D \rightharpoonup \rho$, such that
\begin{enumerate}
\item
if $x\subset y$ are both in $\dom(b)$, then $(x, b(x))R_s(y, b(y))$.
\item
if $y\in \dom(b)$, and $(x,\xi) R_s (y,b(y))$, then $(x,\xi)\in b$, i.e. $b$ is downwards $R_s$-closed
\end{enumerate}

A \it{system of branches} through $\langle R_s \rangle_{s \in I}$ is a family $\langle b_\eta \rangle_{\eta \in J}$ so that each $b_\eta$ is a branch through some $R_{s(\eta)}$, and $D = \bigcup_{\eta \in J} \dom(b_\eta)$.
\end{definition}

Next we show the branch preservation lemma we will use. The proof follows closely Lemma 3.3 of \cite{Neeman}, adapted to the two cardinal version. We include it for completeness. The key fact is that if a forcing adds a branch, then this branch is thinly $\mu$-approximated.

\begin{lemma}\label{branch}
Let $V\subset W$ be models of set theory and  $W$ is a $\tau$-c.c. forcing extension of $V$ and $\mathbb{Q}\in V$ is $\tau$-closed in $V$.
In $W$ suppose $\langle R_s \rangle_{s \in I}$ is a system on $D\times \rho$, for some cofinal $D\subset\mathcal{P}_\mu(\lambda)$, such that forcing with $\mathbb{Q}$ over $W$ adds a system of branches $\langle b_j \rangle_{j\in J}$ through this system. Finally suppose $\chi:=\max(|J|, |I|, \rho)^+<\tau<\mu$. Then there is a cofinal branch $b_j\in W$.

\end{lemma}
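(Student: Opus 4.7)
The plan is to adapt Neeman's branch preservation argument (Lemma 3.3 of \cite{Neeman}) to the two-cardinal setting, arguing by contradiction. Suppose no cofinal $b_j$ lies in $W$. Since the branches $\la b_j\ra_{j\in J}$ cover the cofinal set $D$ and $|J| < \chi < \mu$, some $b_{j^*}$ must have cofinal domain in $D$ (else the union of upper bounds to the individual $\dom(b_j)$ would bound $D$, using regularity of $\mu$). Fix such $j^*$, set $s^* = s(j^*)$, and let $\dot b \in W$ be a $\mathbb{Q}$-name for $b_{j^*}$. For each $q \in \mathbb{Q}$ set $\mathrm{int}(q) := \{(x,\xi) \in D \times \rho \mid q \Vdash_\mathbb{Q} (x,\xi) \in \dot b\}$; this is a branch through $R_{s^*}$ lying in $W$. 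If some $\mathrm{int}(q)$ had cofinal domain we would already have our cofinal branch, so assume every $\mathrm{int}(q)$ has bounded domain: for each $q$ there is $\bar y_q \in D$ such that $q$ does not decide $\dot b(y)$ for any $y \supseteq \bar y_q$ in $D$.

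Using $\tau$-closure of $\mathbb{Q}$ in $W$ (inherited from $V$ via the $\tau$-c.c.\ of $\mathbb{P}$), we construct a splitting tree $\la q_t \mid t \in 2^{\le\rho}\ra$ in $\mathbb{Q}$ together with a $\subseteq$-increasing sequence $\la y_\alpha \mid \alpha \le \rho\ra$ in $D$, so that for each $t \in 2^\alpha$ the condition $q_t$ forces a value for $\dot b(y_\alpha) \in \rho$, and $q_{t\cat 0}, q_{t\cat 1}$ force distinct such values. At a successor stage $\alpha+1$, we pick $y_{\alpha+1} \in D$ containing $y_\alpha$ and every $\bar y_{q_t}$ for $t \in 2^\alpha$; this is possible using cofinality of $D$ together with the bound $2^\alpha < \mu$ to keep the union inside $\mathcal{P}_\mu(\lambda)$. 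Then no $q_t$ decides $\dot b(y_{\alpha+1})$, so each extends to incompatible $q_{t\cat 0}, q_{t\cat 1}$ forcing distinct values. At limits $\alpha \le \rho < \chi < \tau$ we invoke $\tau$-closure to take lower bounds, strengthening further to decide $\dot b(y_\alpha)$.

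Finally, choose $\alpha_0 \le \rho$ with $2^{\alpha_0} > \rho$ (e.g., $\alpha_0 = \rho$ when $\rho$ is infinite). For distinct $t, t' \in 2^{\alpha_0}$ diverging at level $\beta < \alpha_0$, the conditions $q_{t\rst(\beta+1)}, q_{t'\rst(\beta+1)}$ force distinct values of $\dot b(y_{\beta+1})$. But the branch-function property of $R_{s^*}$ forces the value of $\dot b(y_{\alpha_0})$ to uniquely determine all predecessors in any branch, so $q_t, q_{t'}$ must also force distinct values of $\dot b(y_{\alpha_0}) \in \rho$. Thus the $2^{\alpha_0}$ conditions inject into $\rho$, contradicting $2^{\alpha_0} > \rho$. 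The main technical obstacle is the uniform-splitting step---arranging a single $y_\alpha \in D$ witnessing splitting for all $t \in 2^\alpha$ simultaneously---which relies on the bound $2^\alpha < \mu$; this cardinal arithmetic is ensured by the strong-limit hypothesis on $\nu$ active throughout our applications.
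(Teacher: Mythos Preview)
Your argument has a real gap at the closure step. You assert that $\mathbb{Q}$ is $\tau$-closed in $W$, ``inherited from $V$ via the $\tau$-c.c.\ of $\mathbb{P}$,'' but this inference is not valid: Easton's lemma gives only that $\mathbb{Q}$ is ${<}\tau$-\emph{distributive} in $W$. Closure need not transfer, since the $\tau$-c.c.\ forcing can add new ${<}\tau$-sequences of $\mathbb{Q}$-conditions with no lower bound. Your splitting-tree construction requires lower bounds at every limit level $\alpha \le \rho$ for descending sequences $\la q_{t\rst\beta} \mid \beta<\alpha\ra$ that are built in $W$ (each choice depends on the name $\dot b$ and on the system, both of which are $W$-objects), so distributivity alone is not enough. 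In the intended application $\mathbb{Q}$ is a Levy collapse, where descending sequences have canonical lower bounds and closure does happen to transfer; but the lemma is stated abstractly, and as written your proof does not establish it.

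A second gap you acknowledge yourself: you need $2^\alpha < \mu$ for all $\alpha \le \rho$ to keep $\bigcup_{t\in 2^\alpha} \bar y_{q_t}$ inside $\mathcal{P}_\mu(\lambda)$, and this is not among the hypotheses of the lemma. The paper's proof sidesteps both problems by using $\chi$ mutually generic filters rather than a splitting tree. The full-support power $\mathbb{Q}^\chi$ is $\tau$-closed in $V$, hence ${<}\tau$-distributive in $W$; one then bounds all non-cofinal branches and all pairwise splitting among the $\chi$ copies of each $b_j$ by a single $z_1 \in \mathcal{P}_\mu^W(\lambda)$ (a union of at most $\chi$ small sets, which lies in $W$ by distributivity and has size ${<}\mu$ by regularity of $\mu$ in $W$), picks any $z \in D$ above $z_1$, and applies pigeonhole to the map $\xi \mapsto (j_\xi,s_\xi,b^\xi_{j_\xi}(z)) \in J\times I \times \rho$ to force two mutually generic copies of the same branch to agree at $z$, a contradiction. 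No closure in $W$ and no bound on $2^\rho$ are needed.
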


\begin{proof}
Suppose otherwise; say $1_{\mathbb{Q}}$ forces that each $\dot{b}_j\notin W$, where $\dot{b}_j$ is a $\mathbb{Q}$-name in $W$ for the $j$-th branch.
Let $G=\langle G_\xi\mid \xi<\chi\rangle$ be $\mathbb{Q}^{\chi}$-generic over $W$. Here  $\mathbb{Q}^{\chi}$ is the full support $\chi$ power of $\mathbb{Q}$. For every $\xi<\chi, j\in J$, let $b^\xi_j=\dot{b}_j[G_\xi]$.

First note that since $\mathbb{Q}^\chi$ is $\tau$-closed in $V$, it must be $<\tau$ distributive in $W$. Working in $W[G]$, for each non cofinal branch $b^\xi_j$, there is $z^\xi_j\in\mathcal{P}^W_\mu(\lambda)$ such that for all $z\supset z^\xi_j$, $z\notin\dom(b^\xi_j)$. Let $z_0$ be their union. By distributivity, $z_0\in \mathcal{P}^W_\mu(\lambda)$. Similarly, we can find $z_1\supset z_0$, in $\mathcal{P}^W_\mu(\lambda)$, such that  for all cofinal $b_j^\xi, b_j^\eta$
for $\xi<\eta$, for all $z\supset z_1$, $b_j^\xi(z)\neq b_j^\eta(z)$ (possibly because one of them is not defined) if they are branches through the same relation. This splitting follows by mutual genericity.

Let $z\in \mathcal{P}^W_\lambda(\mu)$, $z_1\subset z$ be such that $z\in D$. Since we have a system of branches, for every $\xi<\chi$, there is $j_\xi\in J$, such that $z\in \dom(b^\xi_{j_\xi})$ and $b^\xi_{j_\xi}$ is a branch through $R_{s_\xi}$. Let $\alpha_\xi=b^\xi_{j_\xi}(z)$. Then the map $\xi\mapsto (j_\xi, s_\xi, \alpha_\xi)$ is from $\chi\rightarrow |J|\times |I|\times\rho$, and $|J|,|I|,\rho<\chi$. So there are $\xi<\eta<\chi$ and $j,s$ such that $j=j_\xi=j_\eta$ and $s=s_\xi=s_\eta$. But then $b^\xi_j(z)=b^\eta_j(z)$, and they are both branches through $R_s$. Contradiction.
\end{proof}

\begin{lemma}\label{lemma2muplus}
There exists, in $V[H][K]$, a cofinal $S' \subseteq S$ and a system of branches $\langle b_{s,\delta}\mid s\in I,\delta<\kappa_n\rangle$ through $\la R_s \rst S' \times \kappa_n \ra_{s \in I}$ such that each $b_{s,\delta}$ is a branch through $R_s \rst S' \times \kappa_n$.
\end{lemma}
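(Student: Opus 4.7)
The plan is to mirror the proof of Lemma~\ref{forcelemma2}, with the single-cardinal branch absorption replaced by Lemma~\ref{branch} and with the top node $\gamma \in j(S) \setminus \sup j"\mu$ replaced by a set $u \in j(S)$ above $\bigcup j"\mathcal{P}_\mu(\lambda)$.

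First, I would take $j = j_{U_{n+2}} : V \to M$ a $\lambda$-supercompact embedding and lift it to an embedding $j : V[H][K] \to M^*$ in a generic extension $V[H][K][H^*]$. Following the term-forcing strategy of Lemma~\ref{forcelemma2}, one first absorbs the higher-indexed stages of $H$ into a single generic for $\Col(\kappa_{n+1}, j(\kappa_{n+3}))^V$; then, using the $\mu$-closure of $\Col^{V[H]}(\mu, {<}\lambda)$, a standard master-condition argument lifts the embedding through $K$. This can be arranged so that $V[H][K][H^*] = W[H^*]$ with $V[H][K]$ a $\kappa_{n+1}$-c.c.\ extension of an intermediate model $W$, and with the poset adding $H^*$ over $W$ being $\kappa_{n+1}$-closed in $W$; this positions us to apply Lemma~\ref{branch} with $\tau = \kappa_{n+1}$.

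Second, fix $u \in j(S)$ with $u \supseteq \bigcup j"\mathcal{P}_\mu(\lambda)$; such $u$ exists in $M^*$ because $j(S)$ is cofinal in $\mathcal{P}_{j(\mu)}(j(\lambda))$ there. For every $z \in S$, applying the elementary image of Lemma~\ref{lemma1muplus} to some $x \in j(A)$ with $jc(j(z))$ and $jc(u)$ both in $x$ yields $\xi, \delta < \kappa_n$ and $s_x = (\tau_x, p_x, q_x) \in I$ such that
\[
(p_x, q_x) \Vdash j(\flev{\tau_x}{z}{\xi}) = j\flev{\tau_x}{u}{\delta} \cap j(z).
\]
For each $s = (\tau, p, q) \in I$ and $\delta < \kappa_n$, I then define in $V[H][K][H^*]$
\[
b_{s,\delta} = \{(z, \xi) \mid z \in S,\ \xi < \kappa_n,\ (p, q) \Vdash_{\mathbb{L}_\tau} j(\flev{\tau}{z}{\xi}) = j\flev{\tau}{u}{\delta} \cap j(z)\}.
\]
Each $b_{s,\delta}$ is a branch through $R_s$ (downward closure and the partial-function property follow directly, using the earlier assumption that level enumerations have no repetitions), and the above calculation shows that $\bigcup_{s,\delta}\dom(b_{s,\delta}) = S$, so $\la b_{s,\delta} \ra$ is a system of branches through $\la R_s \ra_{s \in I}$ in $V[H][K][H^*]$.

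Finally, since $|I| \leq \kappa$ and $\kappa_n < \kappa_{n+1}$, the hypothesis $\chi < \tau$ of Lemma~\ref{branch} is met, so some $b_{s,\delta}$ is cofinal and already lies in $V[H][K]$. Setting $\D = \{(s,\delta) \mid b_{s,\delta} \in V[H][K]\}$, by ${<}\kappa_{n+1}$-distributivity of the forcing adding $H^*$ over $V[H][K]$ the whole set $\D$ and the sequence $\la b_{s,\delta} \ra_{(s,\delta) \in \D}$ belong to $V[H][K]$; taking $S' = \bigcup_{(s,\delta) \in \D} \dom(b_{s,\delta})$ gives the desired cofinal subset of $S$ together with a system of branches through $\la R_s \rst S' \times \kappa_n \ra_{s \in I}$, and as in Lemma~\ref{forcelemma2} we thin the index set to eliminate duplicates. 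The main obstacle will be the first step: combining the master condition for $K$, which lives above the critical point of $j$, with the term-forcing absorption of the high factors of $H$ in a way that preserves the $\kappa_{n+1}$-c.c.\ / $\kappa_{n+1}$-closed factorization required to invoke Lemma~\ref{branch}.
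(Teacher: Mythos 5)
Your proposal mirrors the paper's proof essentially step for step: the same embedding $j = j_{U_{n+2}}$ lifted to $j\colon V[H][K]\to M^*$ in $V[H][K][H^*]$ with $H^*$ generic for $\Col(\kappa_{n+1},j(\kappa_{n+3}))^V$, the same top node $u\in j(S)$ above $\bigcup j"\mathcal{P}_\mu(\lambda)$, the same definition of the branches $b_{s,\delta}$, the same invocation of Lemma~\ref{branch} via the $\kappa_{n+1}$-c.c./$\kappa_{n+1}$-closed factorization over $W = V[\prod_{m\geq n+1}H_m][K]$, and the same ${<}\kappa_{n+1}$-distributivity argument to pull $\D$ and the branch sequence into $V[H][K]$. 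The only cosmetic slips are writing ``$V[H][K][H^*]=W[H^*]$'' (it should be $V[H][K][H^*]=W[G'][H^*]$ where $G'$ is the $\kappa_{n+1}$-c.c.\ remainder) and a terse cardinality check, but these don't affect the argument.
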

\begin{proof}
Let $j=j_{U_{n+2}}:V\rightarrow M$ be the $\lambda$-s.c. embedding obtained from $U_{n+2}$.  We lift $j$ to $j:V[H][K]\rightarrow M^*$  in a forcing extension $V[H][K][H^*]$ of $V[H][K]$, where $H^*$ is generic for $\Col(\kappa_{n+1},j(\kappa_{n+3}))^V$.  This poset is $<\kappa_{n+1}$-distributive in $V[H][K]$, $\kappa_{n+1}$-closed in $V[\prod_{m \geq n+1} H_m][K]$, and $V[H][K]$ is a $\kappa_{n+1}$-c.c. extension of this model. In particular, $V[H][K]$ satisfies hypothesis of the branch lemma above.

Let $u\in j(S)$ be such that $u\supset j"\mathcal{P}_\mu(\lambda)$.
For each $\delta<\kappa_n$ and $s=(\tau, p,q)\in I$, let
\[
 b_{s,\delta}=\{(z,\xi)\mid z\in S,\xi<\kappa_n, (p,q)\Vdash_{\mathbb{L}_\tau} j(\flev{\tau}{z}{\xi})=j\flev{\tau}{u}{\delta}\cap j(z)\}.
 \]
$\la b_{s, \delta}\mid s\in I, \delta<\kappa_n\ra$ is a system of branches through $\la R_s \ra_{s \in I}$. By passing to a subset of $I\times\kappa_n$ if necessary, we may assume that any two $b_{s,\eta}$ and $b_{s,\delta}$ are distinct.

Set $\D = \{(s, \delta) \mid b_{s,\delta} \in V[H][K]\}\in V[H][K]$ by $<\kappa_{n+1}$-distributivity, as is $\langle b_{(s,\delta)}\mid (s,\delta)\in\D\rangle$. By Lemma \ref{branch}, there is a confinal branch among these in $V[H][K]$.  So the set $S' := \bigcup_{(s,\delta) \in \D} \dom(b_{s,\delta})$ is cofinal in $\mathcal{P}_\kappa(\lambda)$, and we have that $\la b_{s,\delta} \ra_{(s,\delta)\in\D}$ is a system of branches through $\la R_s \rst S' \times \kappa_n \ra_{s \in I}$.
\end{proof}

Let $\dot{\pi}_{s,\delta}$ be the corresponding name for a branch obtained from $b_{s,\delta}$.
Fix $\bar{z}\in\mathcal{P}_\mu(\lambda)$, such that for any two distinct $(s,\eta)$ and $(s,\delta)$ in $\mathcal{D}$,
the names $\dot{\pi}_{s,\delta}$ and $\dot{\pi}_{s,\eta}$ are forced densely often  to disagree at $\bar{z}$, if they are cofinal; and are bounded by $\bar{z}$ otherwise.
\begin{lemma}\label{lemma3 muplus}
  There exist an unbounded $\bar{S} \subseteq S'$ and $\bar{A} \in U_0$ with $\bar{A} \subseteq A$, so that for all $x \in \bar{A}$, for all $z \in \bar{S}, c(z)\in x$, there exists a $\delta < \kappa_n$ so that, letting $s_x = (\tau_x,p_x,q_x)$, we have $(s_x,\delta) \in \D$, and
   \[
   (p_x,q_x) \Vdash_{\mathbb{L}_{\tau_x}} \dot{d}^{\tau_x}_{g(x)} \cap z = \dot{\pi}_{s_x,\delta} \cap z.
   \]
\end{lemma}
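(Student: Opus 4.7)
The plan is to adapt the proof of Lemma~\ref{forcelemma3} to the two-cardinal setting, replacing the quantifier $\forall \alpha$ by $\forall z$. For each $z \in S'$, set $A_z = \{x \in A \mid c(z) \in x$ and the displayed forcing equation holds at $(x,z)\}$, and let $\bar{S} = \{z \in S' \mid A_z \in U_0\}$; if $\bar{S}$ is cofinal in $\mathcal{P}_\mu(\lambda)$, then taking $\bar{A}$ to be the diagonal intersection $A \cap \{x \mid x \in \bigcap_{c(z) \in x,\, z \in \bar{S}} A_z\}$ gives the desired $\bar{A} \in U_0$. So suppose for contradiction that $\bar{S}$ is bounded by some $\bar{z}_0 \supseteq \bar{z}$ in $\mathcal{P}_\mu(\lambda)$, and let $A' = A \cap \triangle_{\bar{z}_0 \subseteq z \in S'}(\mathcal{P}_{\kappa_0}(\lambda) \setminus A_z) \in U_0$, on which the displayed conclusion fails at every relevant $(x,z)$.

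Next, I trim the system: for each $s \in I$, let $R'_s$ be the restriction of $R_s$ to pairs $(z, \xi)$ with $\bar{z}_0 \subseteq z \in S'$ such that $(z, \xi) \notin b_{s, \delta}$ for every $\delta$ with $(s, \delta) \in \D$. Conditions (1) and (2) for a system are immediate. The work is in verifying (3): given $z, z' \supseteq \bar{z}_0$ in $S'$, I aim to produce $s \in I$, $w \supseteq z \cup z'$ in $S'$, and $\xi, \xi', \delta < \kappa_n$ so that $(z,\xi)$ and $(z',\xi')$ are both $R'_s$-below $(w, \delta)$. To do so, I apply elementarity to Lemma~\ref{lemma1muplus} in the ultrapower $M^*$ of Lemma~\ref{lemma2muplus}: using $u \in j(S)$ with $u \supseteq \bigcup j"\mathcal{P}_\mu(\lambda)$, pick $x' \in j(A')$ containing $jc(j(z)),\, jc(j(z')),\, jc(u)$, and extract witnesses $\xi, \xi', \delta$ and $s = s_{x'} = (\tau_{x'}, p_{x'}, q_{x'})$ with $(j(z),\xi)$ and $(j(z'), \xi')$ both $j(R_s)$-below $(u, \delta)$. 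By elementarity, there is a corresponding $x \in A'$, $w \in S'$ with $z \cup z' \subseteq w$, and $s_x \in I$ realizing the same configuration in $V[H][K]$; by construction, $(z,\xi)$ and $(z', \xi')$ lie in $b_{s_x, \delta}$. It remains to show $(s_x, \delta) \notin \D$: if $(s_x, \delta) \in \D$, then since $x \in A'$ and $\bar{z}_0 \subseteq z$, the conclusion fails at $(x, z)$; but by Lemma~\ref{lemma1muplus}, $(p_x, q_x) \Vdash \dot{d}^{\tau_x}_{g(x)} \cap z = \flev{\tau_x}{z}{\xi}$, and by the definition of $\dot{\pi}_{s_x, \delta}$ together with $(z, \xi) \in b_{s_x, \delta}$, also $(p_x, q_x) \Vdash \flev{\tau_x}{z}{\xi} = \dot{\pi}_{s_x,\delta} \cap z$, a contradiction.

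With $\langle R'_s \rangle_{s \in I}$ verified to be a system, I set $b'_{s, \delta} = b_{s,\delta} \cap R'_s$ for $(s,\delta) \notin \D$; this family, residing in $V[H][K][H^*]$, is a system of branches through $\langle R'_s \rangle_{s \in I}$ by condition (3). Lemma~\ref{branch} then yields some $b'_{s, \delta}$ that is cofinal and lies in $V[H][K]$. But $b_{s, \delta}$ can be reconstructed from any cofinal restriction via $R_s$-coherence (every $(y, \eta) \in b_{s,\delta}$ is determined by any $(y', \eta')$ above it), so $b_{s, \delta} \in V[H][K]$, giving $(s, \delta) \in \D$ and contradicting the defining exclusion in $R'_s$.

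The main obstacle will be the verification of condition (3) of the trimmed system together with the exclusion of $\D$-indexed branches: one must use elementarity carefully to catch two prescribed $z, z'$ simultaneously under a single $x \in A'$ with a shared $s_x$ and $\delta$, and then parlay the failure of the conclusion on $A'$ into showing $(s_x, \delta) \notin \D$.
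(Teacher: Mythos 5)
Your proof follows the same route as the paper's (which itself defers heavily to Lemma~\ref{forcelemma3}): define $A_z$, suppose $\bar{S}$ is bounded, pass to $A'$ on which $(\dagger_{x,z})$ fails, trim the system by deleting ground-model branches, verify condition (3) using elementarity to find $x \in A'$ catching $z,z'$, and invoke Lemma~\ref{branch} to obtain a cofinal ground-model branch contradicting the exclusion. One small imprecision worth flagging: to conclude $(z,\xi)\mathbin{R'_{s_x}}(w,\zeta)$ you must show $(z,\xi)\notin b_{s_x,\delta'}$ for \emph{every} $\delta'$ with $(s_x,\delta')\in\D$ (and likewise for $(z',\xi')$), not merely that your particular witness $\delta$ has $(s_x,\delta)\notin\D$; since the splitting bound $\bar{z}$ is only imposed on $\D$-indexed branches, the ``above the splitting'' shortcut from the one-cardinal case does not reduce the former to the latter, but your forcing-contradiction argument applies verbatim to an arbitrary such $\delta'$, so the gap is merely one of statement, not of substance.
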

\begin{proof}
The proof is as in Lemma \ref{forcelemma3}. Again, we assume otherwise. Let $(\dagger_{x,z})$ be the statement that
there exists a $\delta < \kappa_n$ so that, letting $s_x = (\tau_x,p_x,q_x)$, we have $(s_x,\delta) \in \D$, and
   \[
    (p_x,q_x) \Vdash_{\mathbb{L}_{\tau_x}} \dot{d}^{\tau_x}_{g(x)} \cap z = \dot{\pi}_{s_x,\delta} \cap z.
   \]
Then there must be some $z_0 < \mathcal{P}_\mu(\lambda)$,  $\bar{z} \subset z_0$, a measure one $A'\in U_0$, such that $(\dagger_{x,z})$ fails whenever $c(z) \in x \in A'$ with $z \in S'$. Define a system $\la R'_{s} \ra_{s \in I}$  on $\{z\in S'\mid z_0\subset z\} \times \kappa_n$ from $\la R_{s} \ra_{s \in I}$  by deleting all ground model branches $b_{s,\delta}$ as follows:

  \begin{align*}
  (z,\xi) \mathbin{R'_s} (z',\zeta) \iff &z_0 \subset z,\; z' \in S', \;(z,\xi) \mathbin{R_s} (z',\zeta), \text{ and for all }\delta < \kappa_n,\\
  &\text{ if }(s,\delta) \in \D \text{ then }(z,\xi) \notin b_{s,\delta}.
  \end{align*}

Setting $b'_{s,\delta}$ to be the restriction of $b_{s,\delta}$ to $R'_s$, we get that $\langle b'_{s,\delta}\mid (s,\delta)\notin \D\rangle$ is a system of branches through  $\la R'_s \ra_{s \in I}$.

But then, by branch preservation, there exists some $(s,\delta)$ so that $b'_{s,\delta} \rst R'_s$ is cofinal and belongs to $V[H][K]$, and so $(s,\delta) \in \D$.  Contradiction with the definition of $\la R'_s \ra_{s \in I}$.
\end{proof}

Finally, we can complete the argument.

For all $x \in \bar{A}$ with $c(\bar{z})\in x$, there is a unique $\delta<\kappa_n$, such that for all $z\in \bar{S}$, with $c(z)\in x$, $(p_x,q_x) \Vdash \dot{d}^{\tau_x}_{g(x)} \cap z = \dot{\pi}_{s,\delta } \cap z$.
Let $A''=\{x\in \bar{A}\mid c(\bar{z})\in x, g(x)=\bigcup\{z\mid \bar{z}\subset z, c(z)\in x\}\}\in U_0$. Then for all $x\in A''$, there is some $\delta<\kappa_n$, such that $(p_x,q_x) \Vdash \dot{d}^{\tau_x}_{g(x)}  = \dot{\pi}_{s,\delta } \cap g(x)$.

For $(s,\delta) \in \D$ with $s = (\tau,p,q)$,
 \[
  T_{s,\delta} := \{z \in \mathcal{P}_\mu(\lambda) \mid (p,q) \Vdash \dot{d}^{\tau}_z = \dot{\pi}_{s,\delta} \cap z \},
 \]
We have shown is that $T := \bigcup_{(s,\delta) \in \D} T_{s,\delta} \supseteq \{ g(x) \mid x\in A''\}$ is stationary. Since $|\D| \leq \kappa_n < \mu$, for some $(s,\delta)$, $T_{s,\delta}$ is stationary.  Denoting $s=(\tau,p,q)$, it follows that $b_{s,\delta}$ generates an ineffable branch through ${\dot{{d^\tau}}}$ in any extension containing $(p,q)$, a contradiction.
\end{proof}

\section{Open problems}

Having obtained ITP at the successor of a singular, the direction of forcing ITP at successive cardinals past a singular looks very promising. As a first step one can try and combine the construction in \cite{Neeman} with the results in this paper to force ITP at every $\aleph_n$, $n>1$ together with ITP at $\aleph_{\omega+1}$. We conjecture that this should be possible.

Next, as mentioned in the introduction, in order to get the tree property everywhere one needs failures of SCH. The reason is that the tree property (or strong tree property or ITP) at $\kappa^{++}$ for a singular strong limit $\kappa$ implies that SCH fails at $\kappa$. So here are some questions to consider:
\begin{question}
Can we obtain $\ITP$ at $\kappa^+$ for a singular strong limit cardinal $\kappa$ together with failure of SCH at $\kappa$?
\end{question}

\begin{question}
Can we obtain $\ITP$ at $\kappa^+$ and $\kappa^{++}$ for a singular strong limit cardinal $\kappa$?
\end{question}

\begin{question}
Can we get the above for $\kappa=\aleph_{\omega^2}$? Or much more ambitiously, for $\kappa=\aleph_\omega$?
\end{question}

\bibliographystyle{asl}

\end{document}